\definecolor{darkred}{rgb}{0.7,0,0} 
\newcommand{\defn}[1]{{\color{darkred}\emph{#1}}} 
\newcommand*\circled[1]{\tikz[baseline=(char.base)]{
            \node[shape=circle,draw,inner sep=0.7pt] (char) {#1};}}
\newtheorem{theorem}{Theorem}[section]
\newtheorem{definition}[theorem]{Definition}
\newtheorem{example}[theorem]{Example}
\newtheorem{proposition}[theorem]{Proposition}
\newtheorem{remark}[theorem]{Remark}
\begin{document}

\title{Cluster algebras and binary subwords}
\author{Rachel Bailey}
\address{Department of Mathematics, University of Connecticut, 
	Storrs, CT 06269-1009, USA}
\email{rachel.bailey@uconn.edu}
\author{Emily Gunawan}
\address{Department of Mathematics, University of Oklahoma, 
	Norman, OK 73019-3103, USA}
\email{egunawan@ou.edu}\thanks{
	Both authors were supported by the University of Connecticut. 
	E.G. was supported by the NSF grant DMS-1254567.}

\begin{abstract}
This paper establishes a connection between binary subwords 
and perfect matchings of a snake graph, an important tool in the theory of cluster algebras.  
Every binary expansion~$w$ can be associated to a piecewise-linear poset~$P$ and a snake graph~$G$.
We construct a tree structure called the antichain trie which is isomorphic to the trie of subwords introduced by Leroy, Rigo, and Stipulanti.
We then present bijections from the subwords of $w$ to the antichains of $P$ and to the perfect matchings of~$G$. 
	 
\end{abstract}

\maketitle

\section{Introduction}
\label{sec:poset}

A planar graph called the \defn{snake graph} appears naturally in the study of cluster algebras~\cite{FZ02}. 
An early version of the snake graph is a bipartite graph which is dual to a polygon triangulation and was studied by Propp et al. along with 
a combinatorial zoo of 
related models~\cite{Pro05}. 
Musiker, Schiffler, and Williams then used the perfect matchings of snake graphs to study positivity and bases of cluster algebras from surfaces~\cite{MSW11, MSW13}. 
The theory of abstract snake graph was developed further by {\c{C}anak\c{c}\i} and Schiffler~\cite{CS13}. 
The perfect matchings of snake graphs are connected to various mathematical objects,  
including 
lattice paths in a snake graph~\cite{Pro05, Cla20}, 
matchings of triangles and trails corresponding to these matchings~\cite{BCI74, Pro05, GMV16}, 
matchings of angles and minimal cuts~\cite{Yur18}, 
T-paths~\cite{Sch08,ST09}, 
submodules of a string module~\cite{CC06, MSW13, CaSi18}, 
0-1 sequences called globally compatible sequences (GCSs)~\cite{LLN17}, 
intervals in the weak order on the symmetric group determined by a Coxeter element~\cite{CaSi18}, 
 intervals in the Young lattice~\cite{Cla20}, 
 Conway--Coxeter frieze patterns~\cite{Pro05, Mor15}, 
continued fractions~\cite{CS18}, 
 Jones polynomials~\cite{LS17}, 
 Alexander polynomials~\cite{NT20}, 
 and HOMFLY polynomials~\cite{Yac19}. 
We add another item to this list by providing a connection between perfect matchings of a snake graph and base-2 expansions of positive integers.

In this paper, let a \defn{binary word} be a finite (possibly empty) sequence of letters on the alphabet $\{0,1 \}$  starting with $1$. 
Let a \defn{subword} of a binary word be a ``scattered" subsequence which is itself a binary word.

To every nonempty binary word $w=w_1 w_2 \dots w_d$ of length $d$ 
we associate (the Hasse diagram of) a \defn{piecewise-linear}\footnote{Other authors refer to these posets as `zig-zag-chain posets'~\cite{KMR18}, `path posets'~\cite{Yac19}, and `fences'~\cite{MSS20}.}  
partially ordered set (poset) $P$ as follows.
The elements of $P$ are labeled $P_1=1$, $\dots$, $P_d=d$, arranged from left to right in the Hasse diagram of~$P$, 
and there is an edge between $P_{i-1}$ and $P_{i}$.
For $i \geq 2$, if $w_i=1$ (respectively, if $w_i=0$) then the edge between $P_{i-1}$ and $P_i$ is of slope $1$ (respectively, $-1$), so that we have the 
covering relation $P_{i-1} \lessdot P_i$ (respectively, $P_{i-1} \gtrdot P_i$). See Fig.~\ref{fig:hasse101110left} (left).

One can also associate to $w=w_1 w_2 \dots w_d$
a quiver which is an orientation of the type $\mathbb{A}_d$ Dynkin diagram 
by assigning the orientation 
$P_{i-1} \to P_i$  if $w_i=1$ and $P_{i-1} \leftarrow P_i$ if $w_i=0$ for $i \geq 2$. See Fig.~\ref{fig:quiver101110right} (right).

An \defn{antichain} is a subset $A= \{A_1, A_2, \dots ,A_r\}$ of a poset 
 such that no two distinct elements in $A$ are comparable. 
 For example, the subsets $\{1,3,6\}$, $\{1,4\}$, and $\{2,6\}$ of the poset whose Hasse diagram is given in Fig.~\ref{fig:hasse101110left} (left) are antichains, while $\{2,4\}$ is not.

In~{\cite[Section~2]{LRS17}}, Leroy, Rigo, and Stipulanti introduce a specific construction of a prefix tree (called \defn{trie of subwords}) which is a binary tree that is convenient for counting distinct subwords occurring in a given word $w$. In this paper, we construct 
an analog (called the \defn{antichain trie}) to study the antichains of the poset $P$ corresponding to $w$.
We associate each node $v$ of the antichain trie to an antichain $A(v)$ of $P$ in such a way that moving from a node $v$ to its left child replaces $P_i$ in $A(v)$ with $P_{i+1}$ (where $i$ is the largest integer in $A(v)$) and moving from a node $v$ to its right child adds a new element $P_i$ to $A(v)$ (where $i$ is larger than every integer in $A(v)$). 

\begin{proposition}[Proposition \ref{prop:antichain_trie}] 
The nodes of the antichain trie are distinct antichains.
\end{proposition}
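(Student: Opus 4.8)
The plan is to show that distinct nodes of the antichain trie correspond to distinct antichains of $P$. I would first make precise the recursive rule defining the map $A$: the root corresponds to some base antichain (presumably the empty antichain or $\{P_1\}$), moving to a left child replaces the largest element $P_i$ of $A(v)$ by $P_{i+1}$, and moving to a right child appends a new element $P_j$ with $j$ strictly larger than every index currently in $A(v)$. The key structural observation is that each of these operations is reversible in a controlled way, so that from an antichain $A(v)$ one can read off the history of moves that produced it.

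The main step is to argue by induction on the depth of the tree that $A$ is injective. First I would verify that each $A(v)$ is genuinely an antichain: a left move sends $P_i$ to $P_{i+1}$, and since $P_i$ was the maximal-index element of the antichain, I must check that $P_{i+1}$ is incomparable to the remaining elements (this uses the piecewise-linear structure of $P$, where comparabilities are determined by the slope pattern of $w$); a right move appends an index larger than all present indices, and I would check incomparability similarly. Then, for injectivity, I would show that from any node's antichain I can uniquely recover both its parent and which child it is. The discriminating feature is the largest index $i$ appearing in $A(v)$ together with how it sits relative to the second-largest index: a right move increases the cardinality of the antichain, whereas a left move preserves cardinality but increments the top index, so these two operations produce antichains that are distinguishable from each other and from their common parent.

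Concretely, I would set up the induction so that the inductive hypothesis is that all nodes at depth $\le n$ have distinct antichains, and then show two nodes at depth $n+1$ cannot collide. If they share a parent, the left/right child distinction (cardinality versus top-index increment) separates them; if they have different parents, I would invert the last move to recover distinct parent antichains, contradicting the inductive hypothesis. The invertibility of left and right moves is what makes the recovery well-defined.

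The hardest part, I expect, is verifying that every $A(v)$ is in fact an antichain and that the ``largest index'' bookkeeping is consistent — specifically, confirming that after a left move the new top element $P_{i+1}$ is incomparable to the rest, which requires understanding exactly when two elements of $P$ are comparable in terms of the runs of $0$'s and $1$'s in $w$. Once that comparability criterion is pinned down, the injectivity argument should follow cleanly from the reversibility of the two tree moves.
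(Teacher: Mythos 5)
Your plan for verifying that each $A(v)$ is an antichain is essentially right and matches what the construction provides: whenever a right child is created, its label is the element immediately to the right of some minimal/maximal element $E_n$ that sits strictly between the parent's label and the new label, so for any earlier element $P_a$ of the antichain and any element $P_j$ of the new block the segment $w_{a+1}\cdots w_j$ contains both a $0$ and a $1$, and the two elements are incomparable; left moves stay within such a block and preserve this separation. You correctly identify the comparability criterion for the piecewise-linear poset as the key ingredient here. (For calibration: the paper states Proposition~\ref{prop:antichain_trie} without a formal proof; the closest it comes is the Remark at the end of Section~\ref{sec:antichain_trie} describing the effect of the two moves.)

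The injectivity argument, however, has a genuine gap. Your induction handles siblings (cardinality distinguishes a left child from a right child of the \emph{same} parent) and two nodes of the same child-type (inverting the move gives equal parent antichains, contradicting the inductive hypothesis). But if $x$ is a left child of $p_1$ and $y$ is a right child of $p_2\neq p_1$ with $A(x)=A(y)=A$ and common top element $t$, then inverting gives $A(p_1)=(A\setminus\{t\})\cup\{t-1\}$ and $A(p_2)=A\setminus\{t\}$, which are \emph{different} sets -- perfectly consistent with the inductive hypothesis -- so no contradiction is reached, and the cardinality/top-index dichotomy does not apply because the parents differ. What closes this case is a feature of the construction you never invoke: a copy of $\Gamma_n$ is attached to a node labelled $m$ only when $E_n$ is the \emph{first} minimal/maximal element to the right of $P_m$. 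Consequently the starting label of every block of $p(v)$ is determined by the maximum of the preceding block, so the entire path $p(v)$ -- not just its block maxima -- is recoverable from $A(v)$; since $v\mapsto p(v)$ is injective on any tree, distinctness follows. In the problematic case this forces the singleton top block of $p(y)$ and the length-$\ge 2$ top block of $p(x)$ to begin at the same label, a contradiction. This ``jump to the first minimal or maximal element after the previous entry'' rule is exactly the mechanism of the map $f$ in Definition~\ref{def:injective_map}, which is where the paper effectively supplies the uniqueness your argument is missing.
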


Next, we show that this antichain trie contains all antichains by giving a bijection between the subwords and the antichains. 

\begin{theorem}[Theorem \ref{thm:antichain_to_subword}]
Given a nonempty binary word $w$ and its corresponding piecewise-linear poset $P$, there is a bijection between the subwords of $w$ and the antichains of~$P$. 
\end{theorem}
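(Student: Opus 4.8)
The plan is to deduce the bijection from three ingredients: that the nodes of the antichain trie are pairwise distinct antichains (Proposition~\ref{prop:antichain_trie}), that the antichain trie is isomorphic as a binary tree to the trie of subwords---whose nodes are by construction exactly the subwords of $w$---and that every antichain of $P$ occurs as a node. The first two ingredients already produce an injection from the subwords of $w$ into the antichains of $P$, namely subword $\mapsto$ node $\mapsto A(v)$, so the theorem reduces to \emph{surjectivity}: every antichain of $P$ must be realized as $A(v)$ for some node $v$. This is the heart of the argument and the step I expect to be the main obstacle.

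Before attacking surjectivity I would record the combinatorial description of the order that makes antichains transparent. Since the Hasse diagram is a path $P_1 - P_2 - \cdots - P_d$ whose edges are directed up or down according to $w$, for $a<b$ the elements $P_a$ and $P_b$ are comparable precisely when the unique path between them is monotone, that is, when the letters $w_{a+1}, w_{a+2}, \dots, w_b$ are all equal. Hence $P_a$ and $P_b$ are incomparable exactly when $w_{a+1}\cdots w_b$ is non-constant, and because a string is non-constant as soon as it contains a non-constant substring, a set $\{a_1 < a_2 < \cdots < a_r\}$ is an antichain if and only if $w_{a_j+1}\cdots w_{a_{j+1}}$ is non-constant for every consecutive pair. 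This reduces all antichain tests to consecutive gaps and is the tool I would use throughout.

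For surjectivity I would induct on $\mu(A) = a_1 + a_2 + \cdots + a_r$, the sum of the elements of a nonempty antichain $A = \{a_1 < \cdots < a_r\}$, with base case $A = \emptyset$, the root. The idea is to exhibit a \emph{parent} of $A$ obtained by reversing one trie move and to check that it is again an antichain with strictly smaller $\mu$, so that it is a node by induction and $A$ is one of its children. Write $m = a_r$ and $A' = \{a_1 < \cdots < a_{r-1}\}$. If $m$ is the least index exceeding $a_{r-1}$ for which $A' \cup \{m\}$ is an antichain, then $A$ is exactly the right child of $A'$ (the right move adjoins the least admissible larger element), and $\mu(A') = \mu(A) - m < \mu(A)$. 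Otherwise I claim $A$ is the left child of $A'' = \{a_1 < \cdots < a_{r-1} < m-1\}$, obtained by decrementing the largest element: applying the left move to $A''$, which replaces its maximum $m-1$ by $m$, returns $A$, and $\mu(A'') = \mu(A) - 1$.

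The crux is verifying that $A''$ really is an antichain in the second case, and this is exactly where the combinatorial description earns its keep. The hypothesis that $m$ is not the least admissible extension supplies an index $i$ with $a_{r-1} < i < m$ and $w_{a_{r-1}+1}\cdots w_i$ non-constant; since this is a prefix of $w_{a_{r-1}+1}\cdots w_{m-1}$, the latter is non-constant as well, so $P_{a_{r-1}}$ and $P_{m-1}$ are incomparable and in particular $m-1 > a_{r-1}$. Incomparability of $m-1$ with each earlier $a_j$ then follows automatically, since $w_{a_j+1}\cdots w_{a_{r-1}}$ is already non-constant (as $a_j, a_{r-1} \in A$) and hence so is its extension $w_{a_j+1}\cdots w_{m-1}$. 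Thus $A''$ is an antichain, the induction goes through, and every antichain appears in the trie. Combining this surjectivity with the injection from the first paragraph yields the desired bijection between the subwords of $w$ and the antichains of $P$.
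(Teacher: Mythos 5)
Your route is genuinely different from the paper's. The paper proves Theorem~\ref{thm:antichain_to_subword} by writing down an explicit map $f$ from antichains to subwords (Definition~\ref{def:injective_map}: read the edge labels of the Hasse diagram from $P_1$ to $A_1$, jump to the next minimal/maximal element, read to $A_2$, and so on), and then constructs the inverse directly: a subword $s$, written with each index as small as possible and partitioned into maximal blocks of consecutive letters, determines the antichain of right endpoints of those blocks. The tries play no role in that proof; in fact the paper deduces that the antichain trie contains \emph{all} antichains as a consequence of the bijection, whereas you invert the logic and prove ``every antichain is a node'' first, by induction on $\mu(A)$, and then harvest the bijection from the trie structure. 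Your comparability criterion ($P_a$ and $P_b$ with $a<b$ are comparable iff $w_{a+1}\cdots w_b$ is constant, so antichains are exactly the sets whose consecutive gaps are non-constant) is correct and is essentially the same observation that drives the paper's surjectivity argument; your verification that $A''=A'\cup\{m-1\}$ is again an antichain is also correct.

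The gap is in what you take as given. First, the isomorphism between the antichain trie and the trie of subwords is asserted only in the paper's abstract and is never proved as a numbered statement; you do not prove it either, and establishing it carefully (matching the blocks $u_1,\dots,u_M$ of $w$ to the segments of $P$ between consecutive minimal/maximal elements, including the offset when $n_1=1$, where the subword trie omits $T_1$ while the antichain trie always attaches copies of $\Gamma_1$) is comparable in effort to the paper's direct construction of the inverse of $f$. Second, your induction silently uses structural facts that go beyond Proposition~\ref{prop:antichain_trie} and beyond the informal description in the introduction: that a node has a left child exactly when its label is less than $d$, and --- more delicately --- that a node has a right child exactly when an admissible extension exists and that this right child adjoins the \emph{least} admissible element (the introduction only says it adjoins \emph{some} larger element). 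These facts do follow from the Section~\ref{sec:antichain_trie} construction, since the right child is the root of a copy of some $\Gamma_n$ labelled by the element immediately to the right of the first minimal/maximal element after $L(v)$, which is precisely the least admissible $m$; but that verification is the load-bearing step of your argument and must be written out. Finally, the case $r=1$ of your induction needs separate treatment, since $\{1\}$ is the \emph{left} child of the root rather than a right child of anything, so the dichotomy ``least admissible extension of $A'$ versus decrement the maximum'' does not literally apply when $A'=\emptyset$.
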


It is known that one can associate a binary sequence of length $d-1$ to a snake graph with $d$ tiles (see Definition ~\ref{def:sign_function}). Given a binary word $w=w_1 w_2 \dots w_d$, we associate $(w_2, \dots, w_d)$ to a snake graph $G(w)$ and  
present a bijection from the subwords of $w$ to the perfect matchings of $G(w)$.

\begin{theorem}[Theorem \ref{thm:subwords_to_snake_graph_matchings}]
The subwords of a binary word $w$ are in bijection with the perfect matchings of its corresponding snake graph $G(w)$. 
\end{theorem}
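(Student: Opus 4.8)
The plan is to prove the statement by composition of bijections, using the two tools already established earlier in the excerpt. Theorem~\ref{thm:antichain_to_subword} gives a bijection between the subwords of $w$ and the antichains of the piecewise-linear poset $P$ attached to $w$. So if I can independently produce a bijection between the antichains of $P$ and the perfect matchings of the snake graph $G(w)$, composing the two yields exactly the claimed bijection between subwords and perfect matchings. This factorization is natural because both the poset $P$ and the snake graph $G(w)$ are built from the same underlying data: $P$ records the up/down pattern of $w_2,\dots,w_d$ as covering relations of slope $\pm 1$, while $G(w)$ is the snake graph whose sign sequence (Definition~\ref{def:sign_function}) is precisely $(w_2,\dots,w_d)$. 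The shapes literally coincide, so I expect the local structure of one to translate directly into the other.

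First I would make the correspondence between the poset and the snake graph explicit. Each tile of $G(w)$ should correspond to an element (or an edge) of the Hasse diagram of $P$, with the sign of each tile matching the slope of the corresponding covering relation. I would then set up the combinatorial dictionary relating an antichain $A$ of $P$ to a choice of a perfect matching. The key is to read off from $A$ a consistent set of edges covering every vertex of $G(w)$ exactly once. The natural guess, guided by the antichain-trie description in the introduction, is that adding the maximal element to an antichain or incrementing it corresponds to the two elementary moves on matchings (the two ways of flipping a matching across a tile), so that the trie structure on antichains mirrors the well-known poset structure on perfect matchings of a snake graph.

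The main technical work, and the step I expect to be the principal obstacle, is verifying that the map I define is genuinely well-defined and bijective: that every antichain produces a set of edges which is actually a \emph{perfect} matching (covering all vertices, no overlaps), and conversely that every perfect matching arises from exactly one antichain. Snake graphs have a zig-zag structure in which the choices at consecutive tiles are not independent—a matching is constrained by compatibility along shared edges—and I must check that the incomparability condition defining an antichain is exactly the constraint that enforces these compatibilities. I would handle this by induction on the number of tiles $d-1$ (equivalently, the length of $w$), peeling off the last letter $w_d$ and analyzing how appending a slope-$+1$ versus a slope-$-1$ edge to $P$ extends the snake graph by one tile, tracking how antichains and matchings extend in parallel.

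Alternatively, rather than routing through $P$, one could attempt a direct bijection from subwords of $w$ to perfect matchings of $G(w)$, reading each subword as a sequence of instructions for building a matching tile by tile. This has the advantage of avoiding the intermediate poset, but it duplicates much of the bookkeeping already done for Theorem~\ref{thm:antichain_to_subword}, so I would prefer the two-step composition and reserve the direct description as a corollary or as the content of the explicit formula for the resulting bijection. In either approach the crux is the same: matching the recursive ``add versus increment'' dichotomy on the combinatorial objects with the two-fold local choice at each tile of the snake graph, and confirming inductively that the global consistency conditions coincide.
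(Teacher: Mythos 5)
Your proposal is correct and factors the bijection in essentially the same way the paper does: both arguments compose the subword--antichain bijection of Theorem~\ref{thm:antichain_to_subword} with a correspondence between the antichains of $P$ and the perfect matchings of $G(w)$. The difference lies entirely in how that second correspondence is obtained. The paper does no inductive work here: it invokes the known lattice isomorphism between the perfect matchings of a snake graph and the order filters of its associated piecewise-linear poset (cited to Propp and to Musiker--Schiffler--Williams), and then composes with the standard bijection sending an order filter to its set of minimal elements. You instead propose to construct the antichain--matching bijection from scratch by induction on the number of tiles, peeling off the last letter of $w$. That route is feasible --- antichains of the fence-like poset and perfect matchings of the snake graph satisfy the same two-case recursion governed by the last letter --- but it amounts to re-proving a standard fact, and it is exactly where your anticipated ``principal obstacle'' (checking that incomparability encodes the matching compatibility constraints) lives; routing through order filters, as the paper does, dissolves that obstacle, since an order filter of $P$ translates tile-by-tile into the set of tiles on which a matching differs from $P_{\text{min}}$. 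One further remark: the paper's Theorem~\ref{thm:subwords_to_snake_graph_matchings} also packages the composite as an explicit map $pm$ (circle the edges of the sign sequence selected by the subword, take the filter $fil(s)$ generated by the tiles $\square_L$, and form the symmetric difference of its boundary with $P_{\text{min}}$); your plan defers this explicit description, which suffices for the statement as posed but is the part of the theorem carrying content beyond the bare composition of bijections.
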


The paper is organized as follows. 
In Section~\ref{sec:trie_of_subwords}, we recall the construction of the trie of subwords given by Leroy, Rigo, and Stipulanti.
In Section~\ref{sec:antichain_trie}, we introduce and describe the construction of the antichain trie.  
We define a map from the antichains to the subwords and prove that it is a bijection in Section~\ref{sec:bijection_between_antichains_and_subwords}.
In Section~\ref{sec:subwords_to_snake_graph_matchings}, 
 we give the necessary snake graph theory background and describe a bijection between subwords and perfect matchings.

 \begin{figure}[hbpt]

\begin{center}
\begin{tikzpicture}[xscale=1, yscale=1]
\def\posetedgecolor{blue}
\node(1) at (0,0) {{$1$}}; 
\node(2) at (1,-1) {{$2$}};
\node(3) at (2,0) {{$3$}};
\node(4) at (3,1) {{$4$}};
\node(5) at (4,2) {{$5$}};
\node(6) at (5,1) {{$6$}};

\node[\posetedgecolor] at (-0.7,0) {$\underline{1}$};
\draw[-, line width=1pt] (2) -- (1) node[\posetedgecolor,pos=0.7,below] {\small $0$}; 
\draw[-, line width=1pt]  (2) -- (3) node[\posetedgecolor,pos=0.2,above] {\small $1$};  
\draw[-, line width=1pt] (3) -- (4) node[\posetedgecolor,pos=0.2,above] {\small $1$};  
\draw[-, line width=1pt] (4) -- (5) node[\posetedgecolor,pos=0.2,above] {\small $1$}; 
\draw[-,line width=1pt] (6) -- (5) node[\posetedgecolor,pos=0.7,below] {\small $0$}; 
\end{tikzpicture}
\qquad
\begin{tikzpicture}[xscale=1, yscale=1]
\def\posetedgecolor{blue}
\node(1) at (0,0) {{$1$}}; 
\node(2) at (1,-1) {{$2$}};
\node(3) at (2,0) {{$3$}};
\node(4) at (3,1) {{$4$}};
\node(5) at (4,2) {{$5$}};
\node(6) at (5,1) {{$6$}};

\node[\posetedgecolor] at (-0.7,0) {$\underline{1}$};
\draw[->, line width=1pt, densely dotted] (2) -- (1) node[\posetedgecolor,pos=0.7,below] {\small $0$}; 
\draw[->, line width=1pt]  (2) -- (3) node[\posetedgecolor,pos=0.2,above] {\small $1$};  
\draw[->, line width=1pt] (3) -- (4) node[\posetedgecolor,pos=0.2,above] {\small $1$};  
\draw[->, line width=1pt] (4) -- (5) node[\posetedgecolor,pos=0.2,above] {\small $1$}; 
\draw[->,line width=1pt, densely dotted] (6) -- (5) node[\posetedgecolor,pos=0.7,below] {\small $0$}; 
\end{tikzpicture}
\caption{The Hasse diagram of the poset (left) and the type $\mathbb{A}_6$ quiver (right) associated to the binary word \protect\textcolor{blue}{$101110$}}
\label{fig:hasse101110left}
\label{fig:quiver101110right}
\end{center}

\begin{center}
\def\xfigscale{1.39}
\def\yfigscale{1.05}
\begin{tikzpicture}[xscale=\xfigscale,yscale=\yfigscale,>=latex]
\def\posetedgecolor{blue} 
\def\posetzeroedge{densely dotted} 
\node[circle,fill=black, inner sep=0pt,minimum size=7pt](empty) at (0,0) {};
\node[circle,fill=black, inner sep=0pt,minimum size=7pt](0-1) at (0,-1) {};
\node[circle,fill=black, inner sep=0pt,minimum size=7pt](0-2) at (0,-2) {};
\node[regular polygon,regular polygon sides=4,black, draw, fill=green
, line width=1pt, inner sep=0pt,minimum size=7pt](0-3) at (0,-3) {};
\node[circle,fill=black, inner sep=0pt,minimum size=7pt](0-4) at (0,-4) {};
\node[circle,fill=black, inner sep=0pt,minimum size=7pt](0-5) at (0,-5) {};
\node[diamond,draw, red, line width=2pt, inner sep=0pt,minimum size=7pt](0-6) at (0,-6) {};

\node[diamond,draw,  red, line width=2pt, inner sep=0pt,minimum size=7pt](1-3-6) at (1,-3) {}; 
\node[diamond, draw, red, line width=2pt, inner sep=0pt,minimum size=7pt](1-4-6) at (1,-4) {};
\node[diamond, draw, red, line width=2pt,inner sep=0pt,minimum size=7pt](1-5-6) at (1,-5) {};

\node[regular polygon,regular polygon sides=4,black, draw, fill=green
, line width=1pt, inner sep=0pt,minimum size=7pt](2-2-3) at (2,-2) {};
\node[circle,fill=black, inner sep=0pt,minimum size=7pt](2-3-4) at (2,-3) {};
\node[circle,fill=black, inner sep=0pt,minimum size=7pt](2-4-5) at (2,-4) {};
\node[diamond,draw, red, line width = 2pt, inner sep=0pt,minimum size=7pt](2-5-6) at (2,-5) {};

\node[diamond,draw, red,  line width=2pt,inner sep=0pt,minimum size=7pt](3-3-6) at (3,-3) {};
\node[diamond,draw, red, line width=2pt,  inner sep=0pt,minimum size=7pt](3-4-6) at (3,-4) {};

\draw[-] (empty) 
-- (0-1) node[\posetedgecolor,pos=0.5,right,left] {\small $1$};
\draw[\posetzeroedge] (0-1) 
 -- (0-2) node[\posetedgecolor,pos=0.5,right,left] {\small $0$};
 \draw (0-2)
 -- (0-3) node[\posetedgecolor,pos=0.5,right, left] {\small $1$};
 \draw (0-3)
 -- (0-4) node[\posetedgecolor,pos=0.5,right,left] {\small $1$};
 \draw (0-4)
 -- (0-5) node[\posetedgecolor,pos=0.5,right, left] {\small $1$};
 \draw[\posetzeroedge] (0-5) 
 -- (0-6) node[\posetedgecolor,pos=0.5,right,left] {\small $0$};

\draw[-,\posetzeroedge] (0-2) -- (1-3-6) node[\posetedgecolor,pos=0.5,right,left] {\small $0$};
\draw[-,\posetzeroedge] (0-3) -- (1-4-6) node[\posetedgecolor,pos=0.5,right,left] {\small $0$};
\draw[-,\posetzeroedge] (0-4) -- (1-5-6) node[\posetedgecolor,pos=0.5,right,left] {\small $0$};

\draw[-] (0-1) -- (2-2-3) node[\posetedgecolor,pos=0.5,right,left] {$1$};

\draw[-] (2-2-3) 
-- (2-3-4) node[\posetedgecolor,pos=0.5,right,left] {\small $1$}
-- (2-4-5) node[\posetedgecolor,pos=0.5,right,left] {\small $1$};
\draw[\posetzeroedge] (2-4-5)
-- (2-5-6) node[\posetedgecolor,pos=0.5,right,left] {\small $0$};

\draw[\posetzeroedge] (2-2-3) -- (3-3-6) node[\posetedgecolor,pos=0.5,right,left] {\small $0$};

\draw[\posetzeroedge] (2-3-4) -- (3-4-6) node[\posetedgecolor,pos=0.5,right,left] {\small $0$};

\draw[thick, dotted] (0,-6) circle (4mm);      
\draw[thick,dotted] (-.5,-6.5)..controls (-.5,-2) and  (0, -2) ..(.75,-3.25);

\draw[thick,dotted] (.75,-3.25)..controls (1,-3.5) and  (1.5, -3.5) ..(1.75,-6.5);

\node at (0,-6.9) {$T_3$};

\node at (1.8,-6.9) {$T_2$};

\end{tikzpicture}
~
\begin{tikzpicture}[xscale=\xfigscale,yscale=\yfigscale]
\def\posetedgecolor{blue} 
\node(empty) at (0,0) {$0$};
\node(0-1) at (0,-1) {$1$};
\node(0-2) at (0,-2) {$2$};
\node(0-3) at (0,-3) {$3$};
\node(0-4) at (0,-4) {$4$};
\node(0-5) at (0,-5) {$5$};
\node(0-6) at (0,-6) {$6$};

\node(1-3-6) at (1,-3) {$6$};
\node(1-4-6) at (1,-4) {$6$};
\node(1-5-6) at (1,-5) {$6$};

\node(2-2-3) at (2,-2) {$3$};
\node(2-3-4) at (2,-3) {$4$};
\node(2-4-5) at (2,-4) {$5$};
\node(2-5-6) at (2,-5) {$6$};

\node(3-3-6) at (3,-3) {$6$};
\node(3-4-6) at (3,-4) {$6$};

\draw[-] (empty) 
-- (0-1) node[\posetedgecolor,pos=0.5,right,left] {}
 -- (0-2) node[\posetedgecolor,pos=0.2,below] {} 
 -- (0-3) node[\posetedgecolor,pos=0.5,above] {}
 -- (0-4) node[\posetedgecolor,pos=0.5,left] {}
 -- (0-5) node[\posetedgecolor,pos=0.5,right=1pt] {}
 -- (0-6) node[\posetedgecolor,pos=0.5,left=1mm] {};

\draw[-] (0-2) -- (1-3-6) node[\posetedgecolor,pos=0.5,right,left] {};
\draw[-] (0-3) -- (1-4-6) node[\posetedgecolor,pos=0.5,right,left] {};
\draw[-] (0-4) -- (1-5-6) node[\posetedgecolor,pos=0.5,right,left] {};

\draw[-] (0-1) -- (2-2-3) node[\posetedgecolor,pos=0.5,right,left] {};

\draw[-] (2-2-3) 
-- (2-3-4) node[\posetedgecolor,pos=0.5,right,left] {}
-- (2-4-5) node[\posetedgecolor,pos=0.5,right,left] {}
-- (2-5-6) node[\posetedgecolor,pos=0.5,right,left] {};

\draw[-] (2-2-3) -- (3-3-6) node[\posetedgecolor,pos=0.5,right,left] {};

\draw[-] (2-3-4) -- (3-4-6) node[\posetedgecolor,pos=0.5,right,left] {};

\draw[thick, dotted] (0,-6) circle (4.5mm);      
\draw[thick,dotted] (-.55,-6.5)..controls (-.5,-2) and  (0, -2) ..(.75,-3.25);

\draw[thick,dotted] (.75,-3.25)..controls (1,-3.5) and  (1.5, -3.5) ..(1.75,-6.5);

\node at (0,-6.9) {$\Gamma_2$};

\node at (1.8,-6.9) {$\Gamma_1$};

\end{tikzpicture}
~
\begin{tikzpicture}[xscale=\xfigscale,yscale=\yfigscale,>=latex]
\def\posetedgecolor{blue} 
\node(empty) at (0,0) {$\emptyset$};
\node(0-1) at (0,-1) {$\{1\}$};
\node(0-2) at (0,-2) {$\{2\}$};
\node(0-3) at (0,-3) {$\{3\}$};
\node(0-4) at (0,-4) {$\{4\}$};
\node(0-5) at (0,-5) {$\{5\}$};
\node(0-6) at (0,-6) {$\{6\}$};

\node(1-3-6) at (1,-3) {$\{2,6\}$};
\node(1-4-6) at (1,-4) {$\{3,6\}$};
\node(1-5-6) at (1,-5) {$\{4,6\}$};

\node(2-2-3) at (2,-2) {$\{1,3\}$};
\node(2-3-4) at (2,-3) {$\{1,4\}$};
\node(2-4-5) at (2,-4) {$\{1,5\}$};
\node(2-5-6) at (2,-5) {$\{1,6\}$};

\node(3-3-6) at (3.2,-3) {$\{1,3,6\}$};
\node(3-4-6) at (3.2,-4) {$\{1,4,6\}$};

\draw[-] (empty) 
-- (0-1) node[\posetedgecolor,pos=0.5,right,left] {}
 -- (0-2) node[\posetedgecolor,pos=0.2,below] {} 
 -- (0-3) node[\posetedgecolor,pos=0.5,above] {}
 -- (0-4) node[\posetedgecolor,pos=0.5,left] {}
 -- (0-5) node[\posetedgecolor,pos=0.5,right=1pt] {}
 -- (0-6) node[\posetedgecolor,pos=0.5,left=1mm] {};

\draw[-] (0-2) -- (1-3-6) node[\posetedgecolor,pos=0.5,right,left] {};
\draw[-] (0-3) -- (1-4-6) node[\posetedgecolor,pos=0.5,right,left] {};
\draw[-] (0-4) -- (1-5-6) node[\posetedgecolor,pos=0.5,right,left] {};

\draw[-] (0-1) -- (2-2-3) node[\posetedgecolor,pos=0.5,right,left] {};

\draw[-] (2-2-3) 
-- (2-3-4) node[\posetedgecolor,pos=0.5,right,left] {}
-- (2-4-5) node[\posetedgecolor,pos=0.5,right,left] {}
-- (2-5-6) node[\posetedgecolor,pos=0.5,right,left] {};

\draw[-] (2-2-3) -- (3-3-6) node[\posetedgecolor,pos=0.5,right,left] {};

\draw[-] (2-3-4) -- (3-4-6) node[\posetedgecolor,pos=0.5,right,left] {};

\node at (0,-6.9) {};
\end{tikzpicture}
\caption{The trie of subwords (left) of \textcolor{blue}{101110},  the antichain trie of the corresponding poset $P$ (center) and the antichains of $P$ (right)}
\label{fig:trie101110} 
\label{fig:ex:antichaintrie_101110} 
\label{fig:trie_antichainsof101110} 
\end{center}
\end{figure}

\section{Trie of subwords}
\label{sec:trie_of_subwords}

Let $w = w_1\dots w_d$ be a nonempty binary word and  
consider the \defn{trie of (distinct) subwords} of $w$, denoted by $\mathcal{T}$. 
It is a tree with the root denoted by $\epsilon$. 
If $u$ and $ua$ are two subwords of $w$ with $a$ being a one-letter subword, then $ua$ is a child of $u$. This trie is also called a \defn{prefix tree} because all successors of a node have a common prefix. 
Note that, since $w$ is a binary word, the trie is a binary tree. 
For the rest of the section, we describe the specific construction of $\mathcal{T}$ which is given in~{\cite[Section~2]{LRS17}}.

Factor $w$ into consecutive maximal blocks of $1$'s and blocks of $0$'s such that 
\[
w=\underbrace{1^{n_1}}_{u_1} \underbrace{0^{n_2}}_{u_2} \underbrace{1^{n_3}}_{u_3} \underbrace{0^{n_4}}_{u_4} \cdots \underbrace{1^{n_{2j-1}}}_{u_{2j-1}} \underbrace{0^{n_{2j}}}_{u_{2j}}
\]
with $j\geq 1$, $n_1, \dots, n_{2j-1} \geq 1$ and $n_{2j} \geq 0$.
Let $M$ be such that $w=u_1 u_2 \dots u_M$
where $u_M$ is the last non-empty block of $0$'s or $1$'s.

To construct the trie $\mathcal{T}$, begin with a vertical linear tree $T_w$ with 
nodes 
$v_0$,  
$\dots$, $v_d$. 
Let $T_w$ be rooted at $\epsilon = v_0$ and let node $v_i$ be the left child of node $v_{i-1}$ for all $i=1,\dots,d$. 
Label the edges 
of $T_w$ 
with the letters of $w$ such that the edge between nodes $v_{i-1}$ and $v_{i}$ is labeled $w_{i}$. We identify each node $v$ by the path of edge labels from $\epsilon$ to $v$. 

Starting from the bottom of the vertical linear tree $T_w$, we define a tree $T_l$ for every $l \in \{M-1, \dots, 2, 1\}$. Each tree is rooted at the node $u_1 \dots u_l1$ if $l$ is even and $u_1 \dots u_l0$ if $l$ is odd. 
First, let $T_{M-1}$ be the (linear) subtree of $T_w$ consisting of the last $n_M$ nodes.

We then attach a copy of $T_{M-1}$ to each node (on the vertical tree $T_w$) of the form
\[
\begin{cases}
  u_1u_2 \dots u_{M-2}1^j, & \text{if $u_{M-1}$ is a block of 1s}\\
  u_1u_2 \dots u_{M-2}0^j, & \text{if $u_{M-1}$ is a block of 0s}
\end{cases}
~~~~~~~ \text{ for $j \in \{0,1 \dots, n_{M-1}-1\}$.}
\]
Let the root of each copy of $T_{M-1}$ be the right child of the node of $T_w$ that this root is attached to. This results in a (non-linear) tree $T_w'$ that is larger than $T_w$. 

Let $T_{M-2}$ be the 
subtree of this larger tree $T_w'$ such that its root is   
 $u_1 \dots u_{M-2}1$ if $M-2$ is even and $u_1 \dots u_{M-2}0$ if $M-2$ is odd and $T_{M-2}$ contains all the descendants of this root.  
Then attach a copy of $T_{M-2}$ to each node of the form
\[
\begin{cases}
 u_1 u_2 \dots  u_{M-3}1^j, & \text{if $u_{M-2}$ is a block of 1s}\\
 u_1 u_2 \dots u_{M-3}0^j, & \text{if $u_{M-2}$ is a block of 0s}
 \end{cases}
~~~~~~~ \text{ for $ j\in \{0,1,...,n_{M-2}-1\}$.}
\]
Again, let the root of each copy of $T_{M-2}$ be the right child of the node of $T_w$ that this root is attached to. 

Let $T_{M-3}$ be the subtree of this larger tree such that it is rooted at  
 $u_1$$\dots$$u_{M-3}1$ (resp., $u_1 \dots u_{M-3}0$) if $M-3$ is even (resp., odd) and $T_{M-3}$ contains all descendants of this root.

Continue as such until after we attach a copy of $T_2$. If $n_1=1$ then no copy of $T_1$ is added (as in Fig.~\ref{fig:trie101110} and Fig.~\ref{fig:subwordtrie_10010111} (left)). If $n_1 > 1$, then a copy of $T_1$ is added to each node of the form $1^j, j\in \{0, 1,\dots ,n_1 -1\}$ (as in~\cite[Example~8]{LRS17}).

When $T_l$ is copied, keep its respective edge labels. The new edge connecting a copy of $T_l$ to the original vertical linear tree $T_w$ has the same label as the edge (of $T_w$) above the root of the original copy of $T_l$.

\begin{example}
\label{example:fig:trie101110}
Fig.~\ref{fig:trie101110} (left) shows the complete trie of subwords for the word $101110$. Since $w = \underbrace{1^1}_{u_1}\underbrace{0^1}_{u_2}\underbrace{1^3}_{u_3}\underbrace{0^1}_{u_4}$, we have $M = 4$.  
The subtree $T_3$ is the sole diamond node on $T_w$ because $T_3$ is rooted at the node $u_1u_2u_30$. We then attach a copy of $T_3$ to the nodes $ u_1u_21^j, j\in\{0,1,2\}$. The root of $T_2$ is the node $u_1u_21$ (the square node) and we attach a copy of $T_2$ to the node $u_10^j, j \in \{0\}$. Lastly, because $n_1 = 1$, no copy of $T_1$ is added.
See also Fig.~\ref{fig:subwordtrie_10010111} (left) and~\cite[Figs.~3-4]{LRS17}.

\end{example}\label{ex:trieofsubwords_101110}

\begin{figure}[hbpt]
\begin{center}
\begin{tikzpicture}[xscale=1,yscale=0.5,>=latex]
\def\posetedgecolor{blue}
\node(1) at (0,0) {{$1$}}; 
\node(2) at (1,-1) {{$2$}}; 
\node(3) at (2,-2) {{$3$}}; 
\node(4) at (3,-1) {{$4$}}; 
\node(5) at (4,-2) {{$5$}}; 
\node(6) at (5,-1) {{$6$}}; 
\node(7) at (6,0) {{$7$}}; 
\node(8) at (7,1) {{$8$}}; 

\node[\posetedgecolor] at (-0.7,0) {$\underline{1}$};
\draw[-, line width = 1pt] (2) -- (1) node[\posetedgecolor,pos=0.7,below] {\small $0$}; 
\draw[-, line width = 1pt]  (3) -- (2) node[\posetedgecolor,pos=0.7,below] {\small $0$}; ; 
\draw[-, line width = 1pt] (3) -- (4) node[\posetedgecolor,pos=0.2,above] {\small $1$}; ; 
\draw[-, line width = 1pt] (5) -- (4) node[\posetedgecolor,pos=0.7,below] {\small $0$}; ;
\draw[-,line width=1pt] (5) -- (6) node[\posetedgecolor,pos=0.2,above] {\small $1$}; ; 
\draw[-, line width = 1pt] (6) -- (7) node[\posetedgecolor,pos=0.2,above] {\small $1$}; ; 
\draw[-, line width = 1pt] (7) -- (8) node[\posetedgecolor,pos=0.2,above] {\small $1$}; ; 
\end{tikzpicture}

\caption{The Hasse diagram of the poset corresponding to the word \textcolor{blue}{10010111}} \label{hasse_10010111}

\def\xfigscale{1.1}
\def\yfigscale{0.99}

\begin{tikzpicture}[xscale=\xfigscale,yscale=\yfigscale]
\def\posetedgecolor{blue} 
\def\posetzeroedge{densely dotted}

\node[circle,fill=black, inner sep=0pt,minimum size=7pt](empty) at (0,0) {};
\node[circle,fill=black, inner sep=0pt,minimum size=7pt](0-1) at (0,-1) {};
\node[circle,fill=black, inner sep=0pt,minimum size=7pt](0-2) at (0,-2) {};
\node[circle,fill=black, inner sep=0pt,minimum size=7pt](0-3) at (0,-3) {};
\node[star,star points=5,black, draw,fill=blue,  line width=2pt, inner sep=0pt,minimum size=7pt](0-4) at (0,-4) {};
\node[regular polygon,regular polygon sides=4,black, draw, fill=green, line width=1pt, inner sep=0pt,minimum size=7pt](0-5) at (0,-5) {};
\node[diamond,draw, red, line width=2pt, inner sep=0pt,minimum size=7pt](0-6) at (0,-6) {};
\node[circle,fill=black, inner sep=0pt,minimum size=7pt](0-7) at (0,-7) {};
\node[circle,fill=black, inner sep=0pt,minimum size=7pt](0-8) at (0,-8) {};

\node[diamond,draw, red, line width=2pt, inner sep=0pt,minimum size=7pt](1-5-6) at (1,-5) {};
\node[circle,fill=black, inner sep=0pt,minimum size=7pt](1-6-7) at (1,-6) {};
\node[circle,fill=black, inner sep=0pt,minimum size=7pt](1-7-8) at (1,-7) {};

\node[regular polygon,regular polygon sides=4,black, draw, fill=green, line width=1pt, inner sep=0pt,minimum size=7pt](2-4-5) at (2,-4) {};
\node[diamond,draw,red, line width=2pt, inner sep=0pt,minimum size=7pt](2-5-6) at (2,-5) {};
\node[circle,fill=black, inner sep=0pt,minimum size=7pt](2-6-7) at (2,-6) {};
\node[circle,fill=black, inner sep=0pt,minimum size=7pt](2-7-8) at (2,-7) {};

\node[star,star points=5,black, draw,fill=blue, line width=2pt, inner sep=0pt,minimum size=7pt](3-3-4) at (3,-3) {};
\node[regular polygon,regular polygon sides=4, draw, black, fill=green, line width=1pt, inner sep=0pt,minimum size=7pt](3-4-5) at (3,-4) {};
\node[diamond,draw, red, line width=2pt, inner sep=0pt,minimum size=7pt](3-5-6) at (3,-5) {};
\node[circle,fill=black, inner sep=0pt,minimum size=7pt](3-6-7) at (3,-6) {};
\node[circle,fill=black, inner sep=0pt,minimum size=7pt](3-7-8) at (3,-7) {};

\node[diamond,draw,red, line width=2pt,inner sep=0pt,minimum size=7pt](4-4-6) at (4,-4) {};
\node[circle,fill=black, inner sep=0pt,minimum size=7pt](4-5-7) at (4,-5) {};
\node[circle,fill=black, inner sep=0pt,minimum size=7pt](4-6-8) at (4,-6) {};

\node[star,star points=5,black, draw,fill=blue, line width=2pt, inner sep=0pt,minimum size=7pt](5-2-4) at (5,-2) {};
\node[regular polygon,regular polygon sides=4,black, draw, fill=green, line width=1pt, inner sep=0pt,minimum size=7pt](5-3-5) at (5,-3) {};
\node[diamond,draw,red, line width=2pt, inner sep=0pt,minimum size=7pt](5-4-6) at (5,-4) {};
\node[circle,fill=black, inner sep=0pt,minimum size=7pt](5-5-7) at (5,-5) {};
\node[circle,fill=black, inner sep=0pt,minimum size=7pt](5-6-8) at (5,-6) {};

\node[diamond,draw,red, line width=2pt,inner sep=0pt,minimum size=7pt](6-3-6) at (6,-3) {};
\node[circle,fill=black, inner sep=0pt,minimum size=7pt](6-4-7) at (6,-4) {};
\node[circle,fill=black, inner sep=0pt,minimum size=7pt](6-5-8) at (6,-5) {};

\draw[-] (empty) 
-- (0-1) node[\posetedgecolor,pos=0.5,right,left] {$1$};
\draw[\posetzeroedge](0-1)
 -- (0-2) node[\posetedgecolor, pos=0.5,right,left] {$0$};
 \draw[\posetzeroedge](0-2)
 --(0-3) node[\posetedgecolor,pos=0.5,right,left] {$0$};
 \draw (0-3)
 -- (0-4) node[\posetedgecolor,pos=0.5,right,left] {$1$};
 \draw[\posetzeroedge](0-4)
 -- (0-5) node[\posetedgecolor,pos=0.5,right,left] {$0$};
 \draw (0-5)
 -- (0-6) node[\posetedgecolor,pos=0.5,right,left] {$1$};
 \draw(0-6)
 -- (0-7) node[\posetedgecolor,pos=0.5,right,left] {$1$};
 \draw(0-7)
 -- (0-8) node[\posetedgecolor,pos=0.5,right,left] {$1$};

\draw[-] 
-- (1-5-6) node[\posetedgecolor,pos=0.5,right,left] {}
 -- (1-6-7) node[\posetedgecolor,pos=0.5,right,left] {$1$} 
 -- (1-7-8) node[\posetedgecolor,pos=0.5,right,left] {$1$};
 
 \draw[-] 
-- (2-4-5) node[\posetedgecolor,pos=0.5,right,left] {}
 -- (2-5-6) node[\posetedgecolor,pos=0.5,right,left] {$1$} 
 -- (2-6-7) node[\posetedgecolor,pos=0.5,right,left] {$1$}
 --(2-7-8) node[\posetedgecolor,pos=0.5,right,left] {$1$};
 
  \draw[-] 
-- (3-3-4) node[\posetedgecolor,pos=0.5,right,left] {};
\draw[\posetzeroedge](3-3-4)
 -- (3-4-5) node[\posetedgecolor,pos=0.5,right,left] {$0$};
 \draw (3-4-5)
 -- (3-5-6) node[\posetedgecolor,pos=0.5,right,left] {$1$};
 \draw(3-5-6)
 --(3-6-7) node[\posetedgecolor,pos=0.5,right,left] {$1$};
 \draw(3-6-7)
 --(3-7-8) node[\posetedgecolor,pos=0.5,right,left]{$1$};

 \draw[-] 
-- (4-4-6) node[\posetedgecolor,pos=0.5,right,left] {}
 -- (4-5-7) node[\posetedgecolor,pos=0.5,right,left] {$1$} 
 -- (4-6-8) node[\posetedgecolor,pos=0.5,right,left] {$1$};

 \draw[-] 
-- (5-2-4) node[\posetedgecolor,pos=0.5,right,left] {};
\draw[\posetzeroedge](5-2-4)
 -- (5-3-5) node[\posetedgecolor,pos=0.5,right,left] {$0$} ;
 \draw(5-3-5)
 -- (5-4-6) node[\posetedgecolor,pos=0.5,right,left] {$1$};
 \draw(5-4-6)
 --(5-5-7) node[\posetedgecolor,pos=0.5,right,left] {$1$};
 \draw(5-5-7)
 --(5-6-8) node[\posetedgecolor,pos=0.5,right,left] {$1$};
 
  \draw[-] 
-- (6-3-6) node[\posetedgecolor,pos=0.5,right,left] {}
 -- (6-4-7) node[\posetedgecolor,pos=0.5,right,left] {$1$} 
 -- (6-5-8) node[\posetedgecolor,pos=0.5,right,left] {$1$};

\draw[-] (0-1) -- (5-2-4) node[\posetedgecolor,pos=0.5,above] {$1$};
\draw[-] (0-2) -- (3-3-4) node[\posetedgecolor,pos=0.5,above] {$1$};
\draw[\posetzeroedge] (0-3) -- (2-4-5) node[\posetedgecolor,pos=0.5,above] {$0$};

\draw[-] (0-4) -- (1-5-6) node[\posetedgecolor,pos=0.5,above] {$1$};

\draw[-] (5-2-4) -- (6-3-6) node[\posetedgecolor,pos=0.5,above] {$1$};
\draw[-] (3-3-4) -- (4-4-6) node[\posetedgecolor,pos=0.5,above] {$1$};

\draw[thick,dotted] (-.3,-8.2)..controls (-.5,-4.5) and  (.5, -4.5) ..(.3,-8.2);

\draw[thick,dotted] (-1,-8.2)..controls (-.5,-3.25) and  (.5, -3.25) ..(.75,-8.2);

\draw[thick,dotted] (-1.5,-8.1)..controls (-.75,-3) and  (0, -3) ..(1, -4.25);

\draw[thick,dotted] (1,-4.25)..controls (1.25,-4.5) and  (1.5, -4.75) ..(1.75,-8.1);

\node(a) at (.25,-8.4) {$T_4$};
\node at (.8,-8.4) {$T_3$};
\node at (1.8,-8.4) {$T_2$};

\end{tikzpicture}~~~~
\begin{tikzpicture}[xscale=\xfigscale,yscale=\yfigscale]
\def\posetedgecolor{blue} 
\node(empty) at (0,0) {$0$};
\node(0-1) at (0,-1) {$1$};
\node(0-2) at (0,-2) {$2$};
\node(0-3) at (0,-3) {$3$};
\node(0-4) at (0,-4) {$4$};
\node(0-5) at (0,-5) {$5$};
\node(0-6) at (0,-6) {$6$};
\node(0-7) at (0,-7) {$7$};
\node(0-8) at (0,-8) {$8$};

\node(1-5-6) at (1,-5) {$6$};
\node(1-6-7) at (1,-6) {$7$};
\node(1-7-8) at (1,-7) {$8$};

\node(2-4-5) at (2,-4) {$5$};
\node(2-5-6) at (2,-5) {$6$};
\node(2-6-7) at (2,-6) {$7$};
\node(2-7-8) at (2,-7) {$8$};

\node(3-3-4) at (3,-3) {$4$};
\node(3-4-5) at (3,-4) {$5$};
\node(3-5-6) at (3,-5) {$6$};
\node(3-6-7) at (3,-6) {$7$};
\node(3-7-8) at (3,-7) {$8$};

\node(4-4-6) at (4,-4) {$6$};
\node(4-5-7) at (4,-5) {$7$};
\node(4-6-8) at (4,-6) {$8$};

\node(5-2-4) at (5,-2) {$4$};
\node(5-3-5) at (5,-3) {$5$};
\node(5-4-6) at (5,-4) {$6$};
\node(5-5-7) at (5,-5) {$7$};
\node(5-6-8) at (5,-6) {$8$};

\node(6-3-6) at (6,-3) {$6$};
\node(6-4-7) at (6,-4) {$7$};
\node(6-5-8) at (6,-5) {$8$};

\draw[-] (empty) 
-- (0-1) node[\posetedgecolor,pos=0.5,right,left] {}
 -- (0-2) node[\posetedgecolor,pos=0.2,below] {} 
 -- (0-3) node[\posetedgecolor,pos=0.5,above] {}
 -- (0-4) node[\posetedgecolor,pos=0.5,left] {}
 -- (0-5) node[\posetedgecolor,pos=0.5,right=1pt] {}
 -- (0-6) node[\posetedgecolor,pos=0.5,left=1mm] {}
 -- (0-7) node[\posetedgecolor,pos=0.5,left=1mm] {}
 -- (0-8) node[\posetedgecolor,pos=0.5,left=1mm] {};

\draw[-] 
-- (1-5-6) node[\posetedgecolor,pos=0.5,right,left] {}
 -- (1-6-7) node[\posetedgecolor,pos=0.2,below] {} 
 -- (1-7-8) node[\posetedgecolor,pos=0.5,above] {};
 
 \draw[-] 
-- (2-4-5) node[\posetedgecolor,pos=0.5,right,left] {}
 -- (2-5-6) node[\posetedgecolor,pos=0.2,below] {} 
 -- (2-6-7) node[\posetedgecolor,pos=0.5,above] {}
 --(2-7-8) node[\posetedgecolor,pos=0.5,above] {};
 
  \draw[-] 
-- (3-3-4) node[\posetedgecolor,pos=0.5,right,left] {}
 -- (3-4-5) node[\posetedgecolor,pos=0.2,below] {} 
 -- (3-5-6) node[\posetedgecolor,pos=0.5,above] {}
 --(3-6-7) node[\posetedgecolor,pos=0.5,above] {}
 --(3-7-8) node[\posetedgecolor,pos=0.5,above]{};

 \draw[-] 
-- (4-4-6) node[\posetedgecolor,pos=0.5,right,left] {}
 -- (4-5-7) node[\posetedgecolor,pos=0.2,below] {} 
 -- (4-6-8) node[\posetedgecolor,pos=0.5,above] {};

 \draw[-] 
-- (5-2-4) node[\posetedgecolor,pos=0.5,right,left] {}
 -- (5-3-5) node[\posetedgecolor,pos=0.2,below] {} 
 -- (5-4-6) node[\posetedgecolor,pos=0.5,above] {}
 --(5-5-7) node[\posetedgecolor,pos=0.5,above] {}
 --(5-6-8) node[\posetedgecolor,pos=0.5,above] {};
 
  \draw[-] 
-- (6-3-6) node[\posetedgecolor,pos=0.5,right,left] {}
 -- (6-4-7) node[\posetedgecolor,pos=0.2,below] {} 
 -- (6-5-8) node[\posetedgecolor,pos=0.5,above] {};

\draw[-] (0-1) -- (5-2-4) node[\posetedgecolor,pos=0.5,right,left] {};
\draw[-] (0-2) -- (3-3-4) node[\posetedgecolor,pos=0.5,right,left] {$$};
\draw[-] (0-3) -- (2-4-5) node[\posetedgecolor,pos=0.5,right,left] {$$};
\draw[-] (0-4) -- (1-5-6) node[\posetedgecolor,pos=0.5,right,left] {$$};

\draw[-] (5-2-4) -- (6-3-6) node[\posetedgecolor,pos=0.5,right,left] {$$};
\draw[-] (3-3-4) -- (4-4-6) node[\posetedgecolor,pos=0.5,right,left] {$$};

\draw[thick,dotted] (-.3,-8.2)..controls (-.5,-4.5) and  (.5, -4.5) ..(.3,-8.2);

\draw[thick,dotted] (-1,-8.2)..controls (-.5,-3.25) and  (.5, -3.25) ..(.75,-8.2);

\draw[thick,dotted] (-1.5,-8.1)..controls (-.75,-3) and  (0, -3) ..(1, -4.25);

\draw[thick,dotted] (1,-4.25)..controls (1.25,-4.5) and  (1.5, -4.75) ..(1.75,-8.1);

\node(a) at (.25,-8.4) {$\Gamma_3$};
\node at (.8,-8.4) {$\Gamma_2$};
\node at (1.8,-8.4) {$\Gamma_1$};
\end{tikzpicture}
\caption{The trie of subwords of $\color{blue}10010111$ (left) and the antichain trie of the corresponding poset (right)}
\label{fig:subwordtrie_10010111} 
\label{fig:antichaintrie_10010111} 
\begin{tikzpicture}[xscale=1.8,yscale=1,>=latex]
\def\posetedgecolor{blue} 
\node(empty) at (0,0) {$\emptyset$};
\node(0-1) at (0,-1) {$\{1\}$};
\node(0-2) at (0,-2) {$\{2\}$};
\node(0-3) at (0,-3) {$\{3\}$};
\node(0-4) at (0,-4) {$\{4\}$};
\node(0-5) at (0,-5) {$\{5\}$};
\node(0-6) at (0,-6) {$\{6\}$};
\node(0-7) at (0,-7) {$\{7\}$};
\node(0-8) at (0,-8) {$\{8\}$};

\node(1-5-6) at (1,-5) {$\{4,6\}$};
\node(1-6-7) at (1,-6) {$\{4,7\}$};
\node(1-7-8) at (1,-7) {$\{4,8\}$};

\node(2-4-5) at (2,-4) {$\{3,5\}$};
\node(2-5-6) at (2,-5) {$\{3,6\}$};
\node(2-6-7) at (2,-6) {$\{3,7\}$};
\node(2-7-8) at (2,-7) {$\{3,8\}$};

\node(3-3-4) at (3,-3) {$\{2,4\}$};
\node(3-4-5) at (3,-4) {$\{2,5\}$};
\node(3-5-6) at (3,-5) {$\{2,6\}$};
\node(3-6-7) at (3,-6) {$\{2,7\}$};
\node(3-7-8) at (3,-7) {$\{2,8\}$};

\node(4-4-6) at (4,-4) {$\{2,4,6\}$};
\node(4-5-7) at (4,-5) {$\{2,4,7\}$};
\node(4-6-8) at (4,-6) {$\{2,4,8\}$};

\node(5-2-4) at (5,-2) {$\{1,4\}$};
\node(5-3-5) at (5,-3) {$\{1,5\}$};
\node(5-4-6) at (5,-4) {$\{1,6\}$};
\node(5-5-7) at (5,-5) {$\{1,7\}$};
\node(5-6-8) at (5,-6) {$\{1,8\}$};

\node(6-3-6) at (6,-3) {$\{1,4,6\}$};
\node(6-4-7) at (6,-4) {$\{1,4,7\}$};
\node(6-5-8) at (6,-5) {$\{1,4,8\}$};

\draw[-] (empty) 
-- (0-1) node[\posetedgecolor,pos=0.5,right,left] {}
 -- (0-2) node[\posetedgecolor,pos=0.2,below] {} 
 -- (0-3) node[\posetedgecolor,pos=0.5,above] {}
 -- (0-4) node[\posetedgecolor,pos=0.5,left] {}
 -- (0-5) node[\posetedgecolor,pos=0.5,right=1pt] {}
 -- (0-6) node[\posetedgecolor,pos=0.5,left=1mm] {}
 -- (0-7) node[\posetedgecolor,pos=0.5,left=1mm] {}
 -- (0-8) node[\posetedgecolor,pos=0.5,left=1mm] {};

\draw[-] 
-- (1-5-6) node[\posetedgecolor,pos=0.5,right,left] {}
 -- (1-6-7) node[\posetedgecolor,pos=0.2,below] {} 
 -- (1-7-8) node[\posetedgecolor,pos=0.5,above] {};
 
 \draw[-] 
-- (2-4-5) node[\posetedgecolor,pos=0.5,right,left] {}
 -- (2-5-6) node[\posetedgecolor,pos=0.2,below] {} 
 -- (2-6-7) node[\posetedgecolor,pos=0.5,above] {}
 --(2-7-8) node[\posetedgecolor,pos=0.5,above] {};
 
  \draw[-] 
-- (3-3-4) node[\posetedgecolor,pos=0.5,right,left] {}
 -- (3-4-5) node[\posetedgecolor,pos=0.2,below] {} 
 -- (3-5-6) node[\posetedgecolor,pos=0.5,above] {}
 --(3-6-7) node[\posetedgecolor,pos=0.5,above] {}
 --(3-7-8) node[\posetedgecolor,pos=0.5,above]{};

 \draw[-] 
-- (4-4-6) node[\posetedgecolor,pos=0.5,right,left] {}
 -- (4-5-7) node[\posetedgecolor,pos=0.2,below] {} 
 -- (4-6-8) node[\posetedgecolor,pos=0.5,above] {};

 \draw[-] 
-- (5-2-4) node[\posetedgecolor,pos=0.5,right,left] {}
 -- (5-3-5) node[\posetedgecolor,pos=0.2,below] {} 
 -- (5-4-6) node[\posetedgecolor,pos=0.5,above] {}
 --(5-5-7) node[\posetedgecolor,pos=0.5,above] {}
 --(5-6-8) node[\posetedgecolor,pos=0.5,above] {};
 
  \draw[-] 
-- (6-3-6) node[\posetedgecolor,pos=0.5,right,left] {}
 -- (6-4-7) node[\posetedgecolor,pos=0.2,below] {} 
 -- (6-5-8) node[\posetedgecolor,pos=0.5,above] {};

\draw[-] (0-1) -- (5-2-4) node[\posetedgecolor,pos=0.5,right,left] {};
\draw[-] (0-2) -- (3-3-4) node[\posetedgecolor,pos=0.5,right,left] {};
\draw[-] (0-3) -- (2-4-5) node[\posetedgecolor,pos=0.5,right,left] {};
\draw[-] (0-4) -- (1-5-6) node[\posetedgecolor,pos=0.5,right,left] {};

\draw[-] (5-2-4) -- (6-3-6) node[\posetedgecolor,pos=0.5,right,left] {};
\draw[-] (3-3-4) -- (4-4-6) node[\posetedgecolor,pos=0.5,right,left] {};
\end{tikzpicture}
\caption{Antichains of the poset of Fig.~\protect\ref{hasse_10010111} associated to nodes of the antichain trie of  Fig.~\protect\ref{fig:antichaintrie_10010111} (right)}\label{fig:antichainsof10010111}
\end{center}
\end{figure}

\section{Antichain trie}
\label{sec:antichain_trie}
Given a piecewise-linear poset $P$, we construct an antichain analog, called the \defn{antichain trie} of $P$, of the~\cite{LRS17} trie of subwords. The nodes of the antichain trie are in one-to-one correspondence with the antichains of $P$. 

\begin{definition}[Antichain trie]
Let $P$ be a piecewise-linear poset with elements $P_1$, $P_2$, $\dots$, $P_d$, arranged from left to right in the Hasse diagram of $P$ as in Fig.~\ref{fig:hasse101110left} (left).
We construct the antichain trie of $P$ as follows.

Let N+2 be the number of minimal/maximal elements of $P$. 
Reading in order from left to right on the Hasse diagram, denote the  minimal and maximal elements $E_n, \, n=0, 1, 2, \dots,  N, N+1$. 

Begin with a vertical linear tree $\Gamma_P$ with $d+1$ nodes labeled by $0, 1 ,\dots, d$. Let $\Gamma_P$ be rooted at $\epsilon = 0$ and let the node with label $i$ be the left child of the node labeled $i-1$ for all $i = 1, 2, \dots, d$.

If $N$=0, then the antichain trie is $\Gamma_P$. 
Otherwise, starting from the bottom of $\Gamma_P$, we define a tree $\Gamma_n$ for every $n = N, N-1, \dots, 1$ .

First, let $\Gamma_N$ be the (linear) subtree of $\Gamma_P$ consisting of $P_i, P_{i+1}, \dots , P_d= E_{N+1}$ where $P_i$ is the element immediately to the right of $E_N$ in the Hasse diagram of $P$. We then attach a copy of $\Gamma_{N}$ to each node (on the vertical linear tree $\Gamma_P$) whose label corresponds to the elements of $P$ between $E_{N}$ and $E_{N-1} $, excluding $E_N$ but including $E_{N-1}$. 
Let the root of each copy of $\Gamma_N$ be the right child of the node of $\Gamma_P$ that this root is attached to. 
Note that if there is only one element to the right of $E_{N}$ (that is, if $E_N=P_{d-1}$, as in Fig.~\ref{fig:hasse101110left} (left) and Fig.~ \ref{fig:hassee11001110}), then we attach a single node labeled $P_d$. 
This process results in a (non-linear tree) $\Gamma_P'$ that is larger than $\Gamma_P$. 

Next, let $\Gamma_{N-1}$ be the maximal subtree of this larger tree $\Gamma_P'$ such that its root is labeled by the element of $P$ directly to the right of $E_{N-1}$ (in the Hasse diagram of $P$). We then attach a copy of $\Gamma_{N-1}$ to each node of $\Gamma_P$ whose label corresponds to the elements between $E_{N-1}$ and $E_{N-2}$, excluding $E_{N-1}$ but including $E_{N-2}$.

Let $\Gamma_{N-2}$ be the subtree of this larger tree such that its root corresponds to the element of $P$ directly to the right of $E_{N-2}$ (in the Hasse diagram of $P$) and $\Gamma_{N-2}$ contains all descendants of this root.

Continue as such until we have attached copies of $\Gamma_1$ to the nodes on $\Gamma_P$ whose labels correspond to the elements between $E_1$ and the element $E_0 = P_1$, excluding $E_1$ but including~$P_1$.

\begin{remark}Note that we always add copies of $\Gamma_1$, in contrast to the construction of the trie of subwords in the previous section.
\end{remark}

We now describe a bijective map from the nodes of the antichain trie of $P$ to the antichains of $P$.
To each node $v$, we associate the path $\epsilon, v_1, \dots,v_{\ell}$ along the vertices from $\epsilon$ to $v$. Let $p(v)$ be the sequence of labels $L(\epsilon)=0, L(v_1), \dots,L(v_\ell)$ of these vertices. 
Note that $p(v)$ is an ordered subsequence of $(0,1, \dots,d)$ which is uniquely determined by $v$. 
Let $A(\epsilon)=\emptyset$, and for the rest of the nodes $v$, let  
\begin{align} 
\label{eq:path_to_an_antichain}
A(v)&=\Big\{ j \in \{ 1,2,\dots, d\} ~ |\\
\nonumber
 & \text{$j$ is the largest number in a block of consecutive integers in $p(v)$} \Big\}.\end{align}
\end{definition}

\begin{proposition}
\label{prop:antichain_trie}
Let $P$ be a piecewise-linear poset. 
Then for each node $v$ of the antichain trie of $P$, the set $A(v)$ is a distinct antichain of $P$.
\end{proposition}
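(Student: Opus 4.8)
The plan is to establish two things: that every $A(v)$ is an antichain of $P$, and that the map $v \mapsto A(v)$ is injective (so the antichains are pairwise distinct). Both rest on a single structural observation about how the label changes along each edge of the antichain trie, which I will call the \emph{jump rule}: along any left-child edge the label increases by exactly $1$, whereas along any right-child edge the label jumps from a value $p$ to the value $E+1$, where $E$ is the smallest minimal-or-maximal element of $P$ that is strictly larger than $p$. The left-child statement is immediate, since left children always descend a (possibly copied) spine whose labels are consecutive. For the right-child statement I would induct following the construction order $n = N, N-1, \dots, 1$. Every right-child edge is created by attaching a copy of some $\Gamma_n$ to a spine node of $\Gamma_P$: the new edges introduced at step $n$ are exactly the edges joining the roots of these copies (each labeled $E_n+1$) to the spine nodes in the range $E_{n-1}, E_{n-1}+1, \dots, E_n-1$, and for any such $p$ the least extremum exceeding $p$ is precisely $E_n$, so the jump rule holds for them directly. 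All other right-child edges inside a copy of $\Gamma_n$ are copies of edges created at earlier steps $m>n$; these obey the rule by the inductive hypothesis, and since copying preserves all labels verbatim, so do their copies.

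Granting the jump rule, I would record its consequence for a path $p(v) = (0 = \ell_0, \ell_1, \dots, \ell_k)$: the sequence is strictly increasing, and whenever a gap occurs, i.e.\ $\ell_{j+1} > \ell_j + 1$, the value $E := \ell_{j+1}-1$ is an extremum of $P$ with $\ell_j < E < \ell_{j+1}$. I would then recall the comparability criterion for the fence poset $P$: for $a<b$, the elements $P_a$ and $P_b$ are comparable if and only if $w_{a+1} = w_{a+2} = \dots = w_b$, equivalently if and only if no minimal-or-maximal element lies strictly between $a$ and $b$ (for $1<c<d$ the element $P_c$ is an extremum exactly when $w_c \neq w_{c+1}$, which is the only place the monotone chain can break). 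To see that $A(v)$ is an antichain, take $b < b'$ in $A(v)$. Since $b$ is the largest element of its block, $b+1 \notin p(v)$, so the element $\ell$ of $p(v)$ immediately following $b$ satisfies $\ell \ge b+2$; this is a right-child jump, so by the jump rule there is an extremum $E$ with $b < E < \ell \le b'$. Hence $P_b$ and $P_{b'}$ are incomparable, and $A(v)$ is an antichain.

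For distinctness it suffices, since the excerpt already guarantees that $p(v)$ is uniquely determined by $v$, to show that $A(v)$ determines $p(v)$. The idea is to reconstruct the consecutive blocks of $p(v)$ from the block-maxima recorded in $A(v) = \{a_1 < a_2 < \dots < a_r\}$. The first block is $\{0, 1, \dots, a_1\}$. For $s \ge 2$, the path element following the block-max $a_{s-1}$ is a right-child jump, so the jump rule forces the block ending at $a_s$ to begin at $c_s := E+1$, where $E$ is the least extremum of $P$ exceeding $a_{s-1}$; thus this block is $\{c_s, c_s+1, \dots, a_s\}$. Reassembling the blocks recovers $p(v)$ uniquely from $A(v)$ together with the fixed extrema of $P$, which gives injectivity and completes the proof.

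The main obstacle is the jump rule for right-child edges lying inside nested copies. The subtlety is that, although every copy is attached directly to the main spine $\Gamma_P$, a copy of $\Gamma_n$ already carries within it copies of the trees $\Gamma_m$ with $m>n$, so one must check that the extremum skipped by an internal jump is still measured against the original extrema of $P$ and is still the least one past the source label. The inductive bookkeeping — that $\Gamma_n$ is precisely the subtree rooted at the spine node labeled $E_n+1$, so that its internal right-child edges are exactly the copied edges from earlier steps — together with the fact that copying preserves labels, is what makes this work, and it is the step I would write out most carefully.
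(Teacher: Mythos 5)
Your argument is sound, and it is worth noting at the outset that the paper states Proposition~\ref{prop:antichain_trie} without proof: the only justification it offers is the Remark at the end of Sec.~\ref{sec:antichain_trie}, which records that a left-child move replaces $L(v)$ by $L(v)+1$ and a right-child move adds a label strictly greater than $L(v)+1$. Your ``jump rule'' is exactly the sharpening of that Remark needed to make a proof go through: you pin down the right-child target as $E+1$ for $E$ the least minimal-or-maximal element exceeding the source label, verify it by induction over the attachment steps $n=N,\dots,1$ (correctly handling the nested copies, since copying preserves labels), and combine it with the correct comparability criterion for a fence ($P_a$ and $P_b$ with $a<b$ are comparable iff no extremum lies strictly between $a$ and $b$) to obtain both the antichain property and the reconstruction of $p(v)$ from $A(v)$. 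All of these steps check out, including the boundary details (the extremum produced in the antichain argument satisfies $1<E<d$, so it is a genuine local extremum, and the root never acquires a right child, so every non-root path begins $0,1,\dots$). The one place you should tighten the write-up is the final reduction: you invoke the paper's remark that ``$p(v)$ is uniquely determined by $v$,'' but that is the wrong direction --- for distinctness you need that $v$ is determined by $p(v)$, i.e.\ that no two distinct nodes carry the same label sequence. This is not stated in the paper, but it follows immediately from your own jump rule: left-child steps increase the label by exactly $1$ while right-child steps increase it by at least $2$, so the label sequence determines the sequence of left/right moves from the root and hence the node. With that sentence added, the proof is complete.
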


\begin{example} 
In Fig.~\ref{fig:trie_antichainsof101110} (right), we have labeled every node $v$ with $A(v)$. For example, if $v$ is the node obtained by the path $p(v) = (0, 1, 2, 3, 6 )$ of Fig.~\ref{fig:ex:antichaintrie_101110} (center), then $A(v) = \{3,6\}$. For the node $v$ obtained by walking along $p(v) = (0, 1, 3, 4, 6)$, we have the antichain $A(v) = \{1, 4, 6\}$. 
\end{example}

\begin{example} \label{ex:antichainsof10010111}
Let $w= 10010111$. 
Fig.~\ref{hasse_10010111} shows the Hasse diagram of the eight-element poset $P$ corresponding to $w$. Fig.~\ref{fig:subwordtrie_10010111} (left) depicts the trie of subwords for $w$. Write  $w =\underbrace{1^1}_{u_1}\underbrace{0^2}_{u_2}\underbrace{1^1}_{u_3} \underbrace{0^1}_{u_4}\underbrace{1^3}_{u_5} $, so $M = 5$. The root of $T_4$ is $u_1u_2u_3u_41$ (the diamond node), the root of $T_3$ is $u_1u_2u_30$ (the square node), and the root of $T_2$ is $u_1u_21$ (the 
star node).

The antichain trie of the poset $P$ corresponding to $w$ is given in Fig.~\ref{fig:antichaintrie_10010111} (right). 
Fig.~\ref{fig:antichainsof10010111} shows the $32$ antichains of $P$ 
which are assigned to the $32$ nodes of the antichain trie by~ (\ref{eq:path_to_an_antichain}).  
\end{example}

\begin{example}
Let $P$ be the eight-element poset whose Hasse diagram is illustrated in Fig.~\ref{fig:hassee11001110}.  
The antichain trie for $P$ is given in Fig.~\ref{fig:trieofantichains_11001110}. 
Note that $P$ has 5 minimal/ maximal elements so $N=3$. Our vertical linear tree $\Gamma_P$ has 9 nodes labeled  $0, 1, \dots, 8$. In this example,  $ \Gamma_3$ is the linear subtree consisting solely of $P_8 =8$. We then attach copies of $\Gamma_3$ to the nodes of $\Gamma_P$ labeled by $E_2 = P_4, P_5$ and $P_6$ which are $4, 5, 6$, respectively. Next, we construct $\Gamma_2$ which is rooted at the node on $\Gamma_P$ labeled by $P_5 =5$. We attach copies of $\Gamma_2$ to the nodes of $\Gamma_P$ labeled by $E_1=P_2$ and $P_3$. Lastly, $\Gamma_1$ has its root at the node of $\Gamma_P$ labeled $P_3=3$.  In this example, a single copy of $\Gamma_1$ is attached to the element on $\Gamma_P$ labeled by $P_1=1$. The trie of subwords for the corresponding word $w = 11001110$ is given in \cite[Figs.~3-4]{LRS17}.

\begin{figure}[hbpt]
\begin{tikzpicture}[xscale=1, yscale=1]
\def\posetedgecolor{blue}
\node(1) at (0,0) {{$1$}}; 
\node(2) at (1,1) {{$2$}};
\node(3) at (2,0) {{$3$}};
\node(4) at (3,-1) {{$4$}};
\node(5) at (4,0) {{$5$}};
\node(6) at (5,1) {{$6$}};
\node(7) at (6,2) {{$7$}};
\node(8) at (7,1) {{$8$}};

\node[\posetedgecolor] at (-0.9,0) {$\underline{1}$};
\draw[-, line width=1pt] (2) -- (1) node[\posetedgecolor,pos=0.7,above] {\small $1$}; 
\draw[-, line width=1pt]  (2) -- (3) node[\posetedgecolor,pos=0.2,below] {\small $0$};  
\draw[-, line width=1pt] (3) -- (4) node[\posetedgecolor,pos=0.2,below] {\small $0$};  
\draw[-, line width=1pt] (4) -- (5) node[\posetedgecolor,pos=0.2,above] {\small $1$}; 
\draw[-,line width=1pt] (6) -- (5) node[\posetedgecolor,pos=0.7,above] {\small $1$}; 
\draw[-,line width=1pt] (6) -- (7) node[\posetedgecolor,pos=0.7,above] {\small $1$};
\draw[-,line width=1pt] (7) -- (8) node[\posetedgecolor,pos=0.2,below] {\small $0$};
\end{tikzpicture}

\caption{The Hasse diagram of the poset associated to the word \textcolor{blue}{$11001110$}}
\label{fig:hassee11001110}

\begin{center}
\begin{tikzpicture}[xscale=1.2,yscale=1,>=latex]
\def\posetedgecolor{blue}

\node(empty) at (0,0) {$0$};
\node(0-1) at (0,-1) {$1$};
\node(0-2) at (0,-2) {$2$};
\node(0-3) at (0,-3) {$3$};
\node(0-4) at (0,-4) {$4$};
\node(0-5) at (0,-5) {$5$};
\node(0-6) at (0,-6) {$6$};
\node(0-7) at (0,-7) {$7$};
\node(0-8) at (0,-8) {$8$};

\node(1-7-2) at (1,-7) {$8$};
\node(1-6-2) at (1,-6) {$8$};
\node(1-5-2) at (1,-5) {$8$};

\node(2-2-3) at (2,-4) {$5$};
\node(2-3-3) at (2,-5) {$6$};
\node(2-4-3) at (2,-6) {$7$};
\node(2-5-3) at (2,-7) {$8$};

\node(3-6-4) at (3,-6) {$8$};
\node(3-5-4) at (3,-5) {$8$};

\node(4-6-5) at (4,-6) {$8$};
\node(4-5-5) at (4,-5) {$7$};
\node(4-4-5) at (4,-4) {$6$};
\node(4-3-5) at (4,-3) {$5$};

\node(5-5-6) at (5,-5) {$8$};
\node(5-4-6) at (5,-4) {$8$};

\node(6-7-7) at (6,-7) {$8$};
\node(6-6-7) at (6,-6) {$7$};
\node(6-5-7) at (6,-5) {$6$};
\node(6-4-7) at (6,-4) {$5$};
\node(6-3-7) at (6,-3) {$4$};
\node(6-2-7) at (6,-2) {$3$};

\node(7-6-8) at (7,-6) {$8$};
\node(7-5-8) at (7,-5) {$8$};
\node(7-4-8) at (7,-4) {$8$};

\node(8-6-9) at (8,-6) {$8$};
\node(8-5-9) at (8,-5) {$7$};
\node(8-4-9) at (8,-4) {$6$};
\node(8-3-9) at (8,-3) {$5$};

\node(9-5-10) at (9,-5) {$8$};
\node(9-4-10) at (9,-4) {$8$};

\draw[-] (2-2-3) 
-- (2-3-3) node[\posetedgecolor,pos=0.5,right,left] {}
 -- (2-4-3) node[\posetedgecolor,pos=0.2,below] {} 
 -- (2-5-3) node[\posetedgecolor,pos=0.5,above] {};
 
 \draw[-] (1-5-2) --(0-4)node[\posetedgecolor,pos=0.5,right,left] {};
 \draw[-] (1-6-2) --(0-5)node[\posetedgecolor,pos=0.5,right,left] {};
  \draw[-] (1-7-2) --(0-6)node[\posetedgecolor,pos=0.5,right,left] {};

\draw[-] (empty) 
-- (0-1) node[\posetedgecolor,pos=0.5,right,left] {}
 -- (0-2) node[\posetedgecolor,pos=0.2,below] {} 
 -- (0-3) node[\posetedgecolor,pos=0.5,above] {}
-- (0-4) node[\posetedgecolor,pos=0.5,left] {}
-- (0-5) node[\posetedgecolor,pos=0.5,right,left] {}
-- (0-6) node[\posetedgecolor,pos=0.5,right,left] {}
-- (0-7) node[\posetedgecolor,pos=0.5,right,left] {}
 -- (0-8) node[\posetedgecolor,pos=0.5,left=1mm] {};
 
 \draw[-]( 0-3) --(2-2-3)node[\posetedgecolor,pos=0.5,right,left] {};
 
 \draw[-] (2-2-3) --(3-5-4)node[\posetedgecolor,pos=0.5,right,left] {};
 
 \draw[-] (2-3-3) -- (3-6-4)node[\posetedgecolor,pos=0.5,right,left] {};
 
 \draw[-] (0-2) -- (4-3-5) node[\posetedgecolor,pos=0.5,right,left] {};
 
\draw[-] (4-3-5) 

 -- (4-4-5) node[\posetedgecolor,pos=0.2,below] {} 
 -- (4-5-5) node[\posetedgecolor,pos=0.5,above] {}
 --(4-6-5) node[\posetedgecolor,pos=0.5,above] {};
 
 \draw[-] (0-1) -- (6-2-7)node[\posetedgecolor,pos=0.5,right,left] {};
 
 \draw[-] (6-2-7) 
-- (6-3-7) node[\posetedgecolor,pos=0.5,right,left] {}
 -- (6-4-7) node[\posetedgecolor,pos=0.2,below] {} 
 -- (6-5-7) node[\posetedgecolor,pos=0.5,above] {}
  -- (6-6-7) node[\posetedgecolor,pos=0.5,above] {}
-- (6-7-7) node[\posetedgecolor,pos=0.5,left] {};

 \draw[-] (8-3-9) 
-- (8-4-9) node[\posetedgecolor,pos=0.5,right,left] {}
 -- (8-5-9) node[\posetedgecolor,pos=0.2,below] {} 
 -- (8-6-9) node[\posetedgecolor,pos=0.5,above] {};

 \draw[-] (4-3-5) -- (5-4-6) node[\posetedgecolor,pos=0.5,right,left] {};
 \draw[-] (4-4-5) -- (5-5-6) node[\posetedgecolor,pos=0.5,right,left] {};
  \draw[-] (6-3-7) -- (7-4-8) node[\posetedgecolor,pos=0.5,right,left] {};
    \draw[-] (6-4-7) -- (7-5-8) node[\posetedgecolor,pos=0.5,right,left] {};
    \draw[-] (6-5-7) -- (7-6-8) node[\posetedgecolor,pos=0.5,right,left] {};  
   \draw[-] (6-2-7) -- (8-3-9) node[\posetedgecolor,pos=0.5,right,left] {};
     \draw[-] (8-3-9) -- (9-4-10) node[\posetedgecolor,pos=0.5,right,left] {};
       \draw[-] (8-4-9) -- (9-5-10) node[\posetedgecolor,pos=0.5,right,left] {};

\draw[thick, dotted] (0,-8) circle (5mm);      
\draw[thick,dotted] (-1,-8.5)..controls (-.5,-2.25) and  (.5, -5) ..(1,-5.5);

\draw[thick, dotted]
(1,-5.5) .. controls (1.25, -5.5) and (1.5, -5.5).. (2, -8.5);

\draw[thick, dotted]
(-1.5, -8.5) .. controls (-.5, .5) and (.25,-3).. (3, -3.5); 

\draw[thick, dotted] (3, -3.5).. controls (3.25, -3.75) and (3.5, -4)..(4, -8.5); 

\node(a) at (0,-8.8) {$\Gamma_3$};

\node(b) at (2,-8.8) {$\Gamma_2$};

\node(c) at (4,-8.8) {$\Gamma_1$};

\end{tikzpicture}
\caption{ The antichain trie of the poset of Fig.~\protect\ref{fig:hassee11001110} 
}
\label{fig:trieofantichains_11001110}
\end{center}
\end{figure}
\end{example}

\begin{remark}
Given an antichain trie, 
let a \defn{vertical branch} be a linear subtree with nodes $v_1,\dots,v_{r+1}$ ($r\geq 1$) so that $v_1$ is either the root $\epsilon$ or is the right child of its parent (in particular, it is not a left child), $v_{i+1}$ is the left child of $v_i$ for $i=1,\dots,r$,
and $v_{r+1}$ is not a parent. 
For example, in Fig.~\ref{fig:ex:antichaintrie_101110} (center), the original left-most vertical tree $\Gamma_P$ and the subtree labeled $\{3,4,5,6\}$ are 
the only vertical branches 
while Fig.~\ref{fig:antichaintrie_10010111} (right) has $7$ vertical branches.  
Note that, by construction of the antichain trie, 
a \defn{vertical move} (downward) from a node $v$ to its left child $v'$ 
removes the label $L(v)$ from $A(v)$ and replaces it with the label $L(v')=L(v)+1$.

Similarly, let a \defn{horizontal branch} be a linear subtree with nodes $v_1,\dots,v_{r+1}$ ($r\geq 1$) so that $v_1$ is the left child of some node (in particular, $v_1$ cannot be a right child), $v_{i+1}$ is the right child of $v_i$ for all $i=1,\dots, r$, 
and $v_{r+1}$ does not have a right child. For example, in Fig.~\ref{fig:ex:antichaintrie_101110} (center), the subtrees labeled $\{1,3,6\}$, $\{2,6\}$, $\{3,6\}$, and $\{4,6\}$ are the horizontal branches, with $2$ different horizontal branches both labeled by $\{4,6\}$. 
The antichain trie of Fig.~\ref{fig:antichaintrie_10010111} (right) has exactly $4$ horizontal branches, labeled by $\{1,4, 6 \}$, 
$\{ 2,4,6\}$, $\{ 3, 5 \}$, and $\{ 4, 6\}$.  
A \defn{horizontal move} (to the right) from a node $v$ to its right child $v'$ adds to the antichain $A(v)$ the (positive integer) label $L(v')$ of $v'$ which is greater than $L(v)+1$. 
\end{remark}

\section{Bijection between antichains and subwords}\label{sec:bijection_antichains_subwords}
\label{sec:bijection_between_antichains_and_subwords}
Let $w=w_1 \dots w_d$ be a binary word of length $d$. Let $P=\{ P_1=1,\dots, P_d=d\}$ be the corresponding piecewise-linear poset whose Hasse diagram $H$ has edges labeled by $w_2,\dots,w_d$.
We now define a map $f$ from the antichains in $P$ to the subwords of $w$. 
\begin{definition}[]\label{def:injective_map}
Let $f(\emptyset)$ be the empty subword. 
If $A = \{A_1, A_2 \dots, A_r\}$ is a nonempty antichain in $P$,   
let $f(A)$ be the subword of $w$ which is constructed as follows.  The first letter is 1. The next letters are the (possibly empty) sequence of edge labels of $H$ between $P_1$ and $A_1$. If $A$ contains one element, we are done. If $A$ contains more than one element, jump to the first minimal or maximal element $M_1$ appearing after $A_1$. Record the labels of edges between $M_1$ and $A_2$. Next, jump to the first minimal or maximal element $M_2$ appearing after $A_2$. Record the labels of edges between $M_2$ and $A_3$. Continue as such until we finish  recording the edge labels between $M_{r-1}$ and $A_r$.
\end{definition}

\def\aa{1}
\def\bb{2}
\def\cc{3}
\def\dd{4}
\def\ee{5}
\def\ff{6}
\def\gg{7}
\def\hh{8}
\def\ii{9}
\def\jj{10}
\begin{example}
\label{example:antichain_to_subword}
\def\AntichainToSubwordScaleX{0.75}
\def\AntichainToSubwordScaleY{1}
In Fig.~\ref{fig:example:antichain_to_subword}, 
the antichains $A$=$\{ A_1$=$\circled{\dd}$, $A_2$=$\circled{\jj}\}$ 
and $A$=$\{ A_1$=$\circled{\aa}$, $A_2$=$\circled{\cc}$, $A_3$=$\circled{\gg}$, $A_4$=$\circled{\ii}\}$ 
are mapped to the subwords 
s=\textcolor{blue}{\underline{1}\boxed{011}\boxed{01100}} and 
s=\textcolor{blue}{\underline{1}\framebox(7,12){}\framebox(10,12){1}\framebox(20,12){01}\framebox(10,12){0}},
respectively,
 of
$w=1011101100$. 

\begin{figure}[hbpt]
\begin{center}
\begin{tikzpicture}[xscale=\AntichainToSubwordScaleX,
yscale = \AntichainToSubwordScaleY, 
>=latex,
font = \small
]
\def\posetedgecolor{blue}
\node(1) at (0,0) {{$\aa$}}; 
\node(2) at (1,-1) {{$\bb$}}; 
\node(3) at (2,0) {{$\cc$}}; 
\node(4) at (3,1) {\circled{$\dd$}}; 
\node(5) at (4,2) {{$\ee=M_1$}}; 
\node(6) at (5,1) {{$\ff$}}; 
\node(7) at (6,2) {{$\gg$}}; 
\node(8) at (7,3) {{$\hh$}}; 
\node(9) at (8,2) {{$\ii$}}; 
\node(10) at (9,1) {\circled{$\jj$}}; 

\node[\posetedgecolor] at (-0.7,0) {$\underline{1}$};
\draw[-] (2) -- (1) node[\posetedgecolor,pos=0.5] {\small $0$}; 
\draw[-]  (2) -- (3) node[\posetedgecolor,pos=0.5] {\small $1$}; 
\draw[-] (3) -- (4) node[\posetedgecolor,pos=0.5] {\small $1$};  
\draw[-] (4) -- (5); 
\draw[-] (6) -- (5) node[\posetedgecolor,pos=0.5] {\small $0$}; 
\draw[-] (6) -- (7) node[\posetedgecolor,pos=0.5] {\small $1$};  
\draw[-] (7) -- (8) node[\posetedgecolor,pos=0.5] {\small $1$};  
\draw[-] (9) -- (8) node[\posetedgecolor,pos=0.5] {\small $0$};  
\draw[-] (10) -- (9) node[\posetedgecolor,pos=0.5] {\small $0$}; 
\end{tikzpicture}
\hspace{0mm}
\begin{tikzpicture}
[xscale=\AntichainToSubwordScaleX, 
yscale = \AntichainToSubwordScaleY, 
>=latex, 
]
\def\posetedgecolor{blue}
\node(1) at (0,0) {\circled{$\aa$}}; 
\node(2) at (1,-1) {{$\bb=M_1$}}; 
\node(3) at (2,0) {\circled{$\cc$}}; 
\node(4) at (3,1) {{$\dd$}}; 
\node(5) at (4,2) {{$\ee=M_2$}}; 
\node(6) at (5,1) {{$\ff$}}; 
\node(7) at (6,2) {\circled{$\gg$}}; 
\node(8) at (7,3) {{$\hh=M_3$}}; 
\node(9) at (8,2) {\circled{$\ii$}}; 
\node(10) at (9,1) {{$\jj$}}; 

\node[\posetedgecolor] at (-0.9,0) {$\underline{1}$};
\draw[-] (2) -- (1) ; 
\draw[-]  (2) -- (3) node[\posetedgecolor,pos=0.5] {\small $1$}; 
\draw[-] (3) -- (4) ; 
\draw[-] (4) -- (5) ;
\draw[-] (6) -- (5) node[\posetedgecolor,pos=0.5] {\small $0$}; 
\draw[-] (6) -- (7) node[\posetedgecolor,pos=0.5] {\small $1$};  
\draw[-] (7) -- (8) ; 
\draw[-] (9) -- (8) node[\posetedgecolor,pos=0.5] {\small $0$};  
\draw[-] (10) -- (9); 
\end{tikzpicture}
\caption{
The antichains mapped to 
s=\textcolor{blue}{\underline{1}\boxed{011}\boxed{01100}} 
(left)
and 
s=\textcolor{blue}{\underline{1}\framebox(7,12){}\framebox(10,12){1}\framebox(20,12){01}\framebox(10,12){0}} 
(right) of 
$w=1011101100$
}
\label{fig:example:antichain_to_subword}
\end{center}
\end{figure}
\end{example}

\begin{theorem}
\label{thm:antichain_to_subword}
The map $f$ given in Definition~\ref{def:injective_map} is a bijection from the antichains in $P$ to the subwords of $w$. 
\end{theorem}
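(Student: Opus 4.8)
The plan is to exhibit an explicit two-sided inverse $g$ of $f$, reading an antichain off a subword by a leftmost (greedy) embedding of the subword into $w$ that jumps forward to the next minimal/maximal element whenever it is forced to. The enabling observation is a description of comparability in the piecewise-linear poset $P$: for $i<j$, the elements $P_i$ and $P_j$ are comparable if and only if there is no minimal or maximal element $E_n$ strictly between positions $i$ and $j$ (equivalently, the segment of the Hasse diagram from $i$ to $j$ is monotone, so the relation follows from the cover relations by transitivity). I will record this first as a lemma, since the whole argument rests on it. Two consequences I will extract for an antichain $A=\{A_1<\dots<A_r\}$: between consecutive elements $A_t,A_{t+1}$ there is an extremum strictly between them, so if $M_t$ denotes the first extremum after $A_t$ (as in Definition~\ref{def:injective_map}) then $A_t<M_t<A_{t+1}$; in particular every block of edge labels recorded after the first, namely $w_{M_t+1}\cdots w_{A_{t+1}}$, is nonempty. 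Second, the run from any position $q$ to the first extremum $M$ after it is monotone, so $w_{q+1}=\cdots=w_M$ and the label flips immediately afterward: $w_{M+1}=1-w_{q+1}$ (with the boundary case $M=d$ meaning the tail from $q$ is monotone all the way to $P_d$).

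Building on this I will interpret $f(A)$ as a walk: starting at $P_1$, read edge labels while moving right to $A_1$, jump to $M_1$, read to $A_2$, jump to $M_2$, and so on; thus $f(A)$ is the concatenation $1\cdot B_0\cdot B_1\cdots B_{r-1}$ with $B_0=w_2\cdots w_{A_1}$ and $B_t=w_{M_t+1}\cdots w_{A_{t+1}}$. I will then define $g$ on subwords by the greedy parse: keep a pointer $q$ (initialized at $q=1$, matching $w_1=1$), and for each successive letter $s_k$, advance $q$ by one if $w_{q+1}=s_k$, and otherwise record $q$ as an antichain element and jump $q$ to the first extremum after it before matching $s_k$. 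By the monotone-run computation above, a jump always restores the match ($w_{M+1}=1-w_{q+1}=s_k$), so each forced step needs exactly one jump; the nonemptiness of the blocks $B_t$ ($t\geq 1$) is precisely what guarantees that no two jumps occur back to back. The recorded positions, together with the final value of $q$, are output as $g(s)$.

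To finish I will check three things. Well-definedness: maintaining the invariant that $s_k\cdots s_m$ embeds as a subword of $w_{q+1}\cdots w_d$, a standard leftmost-embedding exchange argument shows the parse never gets stuck on a genuine subword, since at a mismatch the earliest position after $q$ carrying the required flipped letter is exactly $M+1\leq d$; hence $g$ is defined on all subwords, and because $q$ strictly increases past the inserted extremum at each jump, its output is a genuine antichain. The identity $g(f(A))=A$: running the parse on $1\,B_0B_1\cdots B_{r-1}$, each block is traversed without mismatch and each seam $w_{A_t}\mid w_{M_t+1}$ triggers exactly one mismatch (because $w_{M_t+1}=1-w_{A_t+1}$), so the parse records precisely $A_1,\dots,A_r$. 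The identity $f(g(s))=s$: the jump target chosen by $g$ at a recorded position is by definition the first extremum after it, matching $M_t$ in Definition~\ref{def:injective_map}, so replaying $f$ on $g(s)$ reproduces the same chunk decomposition and hence returns $s$. The empty word and singleton antichains are handled as base cases ($f(\emptyset)=\epsilon$, and a single element $\{A_1\}$ gives the length-$A_1$ prefix of $w$).

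The step I expect to be the main obstacle is the bookkeeping around the forced jumps: showing that at a mismatch exactly one jump is needed and never two — which is where the antichain condition enters, through the nonemptiness of the blocks $B_t$ — and handling the boundary positions, namely an empty first block $B_0$ when $A_1=1$, a singleton antichain, and the case where the first extremum after $q$ is the final element $P_d$. Getting the equivalence between the greedy jump and the leftmost embedding exactly right is what makes both surjectivity (every subword is hit) and the absence of stuck states clean, so I would isolate that equivalence as a lemma before assembling the two composition identities.
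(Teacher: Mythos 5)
Your proposal is correct and follows essentially the same route as the paper: your greedy parse $g$ is exactly the paper's construction $s\mapsto A_{\Sigma_s}$ via the leftmost embedding and its maximal consecutive blocks, and both arguments rest on the same key fact that two positions are incomparable precisely when a minimal or maximal element lies strictly between them. The only difference is presentational --- you package the argument as an explicit two-sided inverse and verify $f(g(s))=s$ and the leftmost-ness of $f$'s output in detail, where the paper leaves those checks implicit in its surjectivity/injectivity argument.
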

\begin{proof}
To show that $f$ is surjective, let $s$ be a subword of $w=w_1 w_2 \dots w_d$. If $s$ is nonempty, write $s=w_{i_1}w_{i_2} \dots w_{i_\ell}$ in such a way that each index $i_k$ is as small as possible (see Example~\ref{ex:inverse_map_11010}). Note that $w_{i_1}=w_1=1$ per our definition of subwords. 
Partition $\{ w_{i_1}, w_{i_2}, \dots, w_{i_\ell} \}$ into a set $\Sigma=\Sigma_s$ of (at least one)  maximal blocks of subsequences of $w$ such that each subsequence is a consecutive subsequence.  

Let \[A=A_\Sigma=\{ j \in P | (w_i, w_{i+1} \dots, w_j) \in \Sigma \}.\]
In other words, $(w_1)\in \Sigma$ if and only if $1 \in A$; if $2\leq n \leq d$, then $n\in A$ if and only if the node $n$ in the Hasse diagram $H$ is immediately to the right of a block of edges in $\Sigma$.

We claim that $A$ is an antichain.
If $\Sigma$ only contains one block, then $A$ consists of one element, and hence $A$ is an antichain in $P$.
Otherwise, let $(w_i,\dots,w_j)$, where $3\leq i\leq d$, be a second block in $\Sigma$. 
If $w_i=0$, then $w_{i-1}=1$ since the indices $i_k$'s for the $w_{i_k}$'s were chosen to be as small as possible. Likewise, if $w_i=1$, then $w_{i-1}=0$. This means that the node $i-1$ (which is not in $A$) between edges $w_{i-1}$ and $w_{i}$ is either a minimal or maximal element of $P$. Hence no node to the left of $i-1$ is related to the node $j$. Similarly, if there is another block $(w_{i'},\dots,w_{j'})$ of $\Sigma$ which appears after  $(w_i,\dots,w_j)$, the node $j$ is not related to the node $j'$. This shows that $j$ is not related to any other element in $A$.

To show that the map is injective, assume $f(A) = f(A')$. Then $f(A) = s = f(A')$ for some subword $s=w_{i_1} \dots w_{i_\ell}$. Let $\Sigma_s$ be the set of maximal blocks of $w_{i_1}, \dots, w_{i_\ell}$ as defined on the first paragraph of this proof. But both $A$ and $A'$ are defined by the same set $\Sigma_s$ of maximal blocks, so $A = A'$.
\end{proof}

\def\myyscale{0.735}
\def\myxscale{1}

\begin{example}\label{ex:inverse_map_11010}
Consider the word $w=w_1 \dots w_{10} = 1011101100$. 
Identify $w_2,\dots,w_{10}$ with the edges of the Hasse diagram $H$ of $P$, see Fig.~\ref{fig:poset1011101100}. 
We write the subword s=\textcolor{blue}{$11010$} as $s=w_1 w_3 w_6 w_7 w_9$ so that the index of each letter $w_{i_k}$ is as small as possible.  Partitioning $\{ w_1, w_3, w_6, w_7, w_9 \}$ into maximal blocks of consecutive subsequences of $w$ gives four blocks $(w_1)$, $(w_3)$, $(w_6, w_7)$, and $(w_9)$. We build an antichain as follows. Since $(w_1)$ is a block, we take the left-most node of $H$, node $1$.
We take the nodes of $H$ to the right of the other three blocks, nodes $3$, $7$, and $9$. Therefore, the subword 11010 corresponds to the antichain $\{
\circled{1},\circled{3},\circled{7},\circled{9}\}$.
\end{example}

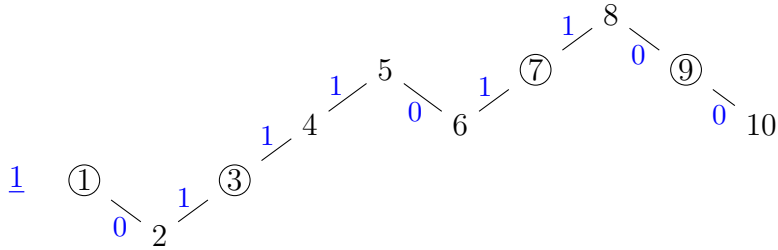
\begin{figure}[h]
\begin{center}
\begin{tikzpicture}[xscale=\myxscale,yscale=\myyscale,>=latex]
\def\posetedgecolor{blue}
\node(1) at (0,0) {\circled{$1$}}; 
\node(2) at (1,-1) {$2$}; 
\node(3) at (2,0) {\circled{$3$}}; 
\node(4) at (3,1) {$4$}; 
\node(5) at (4,2) {$5$}; 
\node(6) at (5,1) {$6$}; 
\node(7) at (6,2) {\circled{$7$}}; 
\node(8) at (7,3) {$8$};
\node(9) at (8,2) {\circled{$9$}};
\node(10) at (9,1)  {$10$}; 

\node[\posetedgecolor] at (-0.9,0) {$\underline{1}$};
\draw[-] (2) -- (1) node[\posetedgecolor,pos=0.7,below] {\small $0$}; 
\draw[-]  (2) -- (3) node[\posetedgecolor,pos=0.2,above] {\small $1$}; ; 
\draw[-] (3) -- (4) node[\posetedgecolor,pos=0.2,above] {\small $1$}; ; 
\draw[-] (4) -- (5) node[\posetedgecolor,pos=0.2,above] {\small $1$}; ;
\draw[-] (6) -- (5) node[\posetedgecolor,pos=0.7,below] {\small $0$}; ; 
\draw[-] (6) -- (7) node[\posetedgecolor,pos=0.2,above] {\small $1$}; ; 
\draw[-] (7) -- (8) node[\posetedgecolor,pos=0.2,above] {\small $1$}; ; 
\draw[-] (9) -- (8) node[\posetedgecolor,pos=0.7,below] {\small $0$}; ; 
\draw[-] (10) -- (9)node[\posetedgecolor,pos=0.7,below] {\small $0$}; ;
\end{tikzpicture}
\caption{
Hasse diagram representing the word $\color{blue}1011101100$; the antichain corresponding to the subword 
\textcolor{blue}{$\underline{1}\boxed{1}\boxed{01}\boxed{0}$} 
is $\{ 1,3,7,9\}$
}
\label{fig:poset1011101100}
\end{center}
\end{figure}

\section{Subwords to snake graph matchings}
\label{sec:subwords_to_snake_graph_matchings}

\subsection{Background}
We review the theory of snake graphs developed in~\cite{Pro05, MSW11, MSW13, CS13}.

\begin{definition}
A \defn{snake graph} is a nonempty connected sequence of square tiles~
\begin{tikzpicture}[scale=.25]
\draw (0,0) -- (1,0) -- (1,1) -- (0,1) -- cycle;
\end{tikzpicture}.
To build a snake graph $G$ with $d$ tiles, start with one tile, then glue a new tile to the north or the east of the previous tile. We refer to the southwest-most tile of $G$ as the first tile $G_1$ and the northeast-most tile as the last tile $G_d$. Fig.~\ref{fig:minimalmatching} (left) illustrates a snake graph with $10$ tiles.
\end{definition}

\begin{definition}
A \defn{matching} of a graph $G$ is a subset of non-adjacent edges of $G$.
A \defn{perfect matching} of $G$ is a matching where every vertex of $G$ is adjacent to exactly one edge of the matching, see Fig.~\ref{fig:snake_graph_matching}.
Define the \defn{minimal matching} $pm_{\text{min}}$ to be the unique perfect matching of $G$ which contains the south edge of the first tile $G_1$ 
 and only boundary edges, see Fig.~\ref{fig:minimalmatching} ~(center).

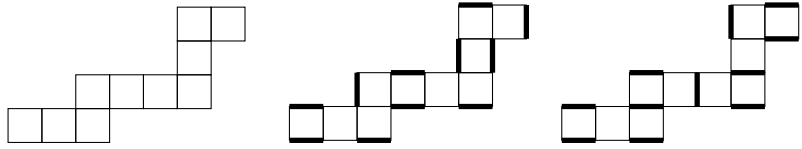
\begin{figure}[h]
\begin{center}
\def\myscale{0.45}
\newcommand\macropositivetile{
     \def\south{}
     \def\east{}
     \def\north{}
     \def\west{} 
     \draw (\x + 0, \y) --  (\x+1,\y) node [pos=0.5,above] {\south} -- (\x+1,\y+1) node [pos=0.5,left] {\east} -- (\x+0,\y+1) node [pos=0.5,above] {\north} --  (\x + 0, \y)  node [pos=0.5,left] {\west};
}

\newcommand\macronegativetile{
     \def\south{}
     \def\east{}
     \def\north{}
     \def\west{} 
     \draw (\x + 0, \y) --  (\x+1,\y) node [pos=0.5,above] {\south} -- (\x+1,\y+1) node [pos=0.5,left] {\east} -- (\x+0,\y+1) node [pos=0.5,above] {\north} --  (\x + 0, \y)  node [pos=0.5,left] {\west};
}
 \begin{tikzpicture}[scale=\myscale]
 \foreach \x in {0,1,...,2} 
 {
     \draw (\x + 0, 0) -- (\x+1,0) -- (\x+1,1)-- (\x+0,1) -- cycle;
 }

\def\y{1}
 \foreach \x in {2,...,5} 
 {
     \draw (\x + 0, \y) -- (\x+1,\y) -- (\x+1,\y+1)-- (\x+0,\y+1) -- cycle;
 } 
 
\def\y{2}
 \foreach \x in {5} 
 {
     \draw (\x + 0, \y) -- (\x+1,\y) -- (\x+1,\y+1)-- (\x+0,\y+1) -- cycle;
 }  

\def\y{3}
\foreach \x in {5,6} 
{
     \draw (\x + 0, \y) -- (\x+1,\y) -- (\x+1,\y+1)-- (\x+0,\y+1) -- cycle;
 } 
 
\end{tikzpicture} 
\newcommand\matchingcolor{2pt}
 \begin{tikzpicture}[scale=\myscale,font=\tiny]
 \foreach \x in {0,1,...,2} 
 {
\def\y{0}
     \ifthenelse{\x = 0 \OR \x =2}
     {
     \macropositivetile
     }{
     \macronegativetile
     }
     \ifthenelse{\x = 0 \OR \x =2}{\draw[line width=\matchingcolor] (\x + 0, \y+0) -- (\x+1,\y+0);}{} 
     \ifthenelse{\x = 0}{\draw[line width=\matchingcolor] (\x + \y+0, \y+1) -- (\x+1,\y+1);}{} 
 }

\def\y{1}
 \foreach \x in {2,...,5} 
 {
     \ifthenelse{\x = 3 \OR \x =5}
     {\macropositivetile}{\macronegativetile}
     \ifthenelse{\x = 2}{\draw[line width=\matchingcolor] (\x+0, \y+0) -- (\x, \y+1);}{} 
     \ifthenelse{\x = 3 \OR \x =5}{\draw[line width=\matchingcolor] (\x+0, \y+0) -- (\x+1, \y+0);}{} 
     \ifthenelse{\x = 3}{\draw[line width=\matchingcolor] (\x+0, \y+1) -- (\x+1,\y+1);}{} 
 } 
 
\def\y{2}
 \foreach \x in {5} 
 {
     \macronegativetile
     \ifthenelse{\x = 5}{\draw[line width=\matchingcolor] (\x+0, \y+0) -- (\x, \y+1);}{} 
     \ifthenelse{\x = 5}{\draw[line width=\matchingcolor] (\x+1, \y) -- (\x+1,\y+1);}{} 
 }  

\def\y{3}
\foreach \x in {5,6} 
{
     \ifthenelse{\x = 5}
     {\macropositivetile}{\macronegativetile}
     \ifthenelse{\x = 5}{\draw[line width=\matchingcolor] (\x+0, \y+1) -- (\x+1,\y+1);}{} 
     \ifthenelse{\x = 6}{\draw[line width=\matchingcolor] (\x+1, \y) -- (\x+1,\y+1);}{} 
 } 
\end{tikzpicture} 
 \begin{tikzpicture}[scale=\myscale,font=\tiny]
 \foreach \x in {0,1,...,2} 
 {
\def\y{0}
     \ifthenelse{\x = 0 \OR \x =2}
     {
     \macropositivetile
     }{
     \macronegativetile
     }
     \ifthenelse{\x = 0 \OR \x =2}{\draw[line width=\matchingcolor] (\x + 0, \y+0) -- (\x+1,\y+0);}{} 
     \ifthenelse{\x = 0}{\draw[line width=\matchingcolor] (\x + \y+0, \y+1) -- (\x+1,\y+1);}{} 
 }

\def\y{1}
 \foreach \x in {2,...,5} 
 {
     \ifthenelse{\x = 3 \OR \x =5}
     {\macropositivetile}{\macronegativetile}
     \ifthenelse{\x = 2 \OR \x =5}{\draw[line width=\matchingcolor] (\x+0, \y+0) -- (\x+1, \y+0);}{} 
     \ifthenelse{\x = 2}{\draw[line width=\matchingcolor] (\x+0, \y+1) -- (\x+1,\y+1);}{} 
      \ifthenelse{\x = 3}{\draw[line width=\matchingcolor] (\x+1, \y) -- (\x+1,\y+1);}{} 
 } 
 
\def\y{2}
 \foreach \x in {5} 
 {
     \macronegativetile
     \ifthenelse{\x = 5}{\draw[line width=\matchingcolor] (\x+0, \y+0) -- (\x+1, \y+0);}{} 
 }  

\def\y{3}
\foreach \x in {5,6} 
{
     \ifthenelse{\x = 5}
     {\macropositivetile}{\macronegativetile}
        \ifthenelse{\x = 5}{\draw[line width=\matchingcolor] (\x+0, \y+0) -- (\x, \y+1);}{} 

     \ifthenelse{\x = 6}{\draw[line width=\matchingcolor] (\x+0, \y+0) -- (\x+1, \y+0);}{} 
     \ifthenelse{\x = 6}{\draw[line width=\matchingcolor] (\x+0, \y+1) -- (\x+1,\y+1);}{} 
 } 
\end{tikzpicture} 
\caption{A snake graph (left); the minimal perfect matching (center); another perfect matching  of the snake graph (right)}\label{fig:snake_graph_matching}
\label{fig:minimalmatching}
\end{center}
\end{figure}
\end{definition}

A cluster algebra~\cite{FZ02} is a commutative algebra with distinguished generators called \defn{cluster variables} which can be written as Laurent polynomials with positive coefficients. In the case of a family of cluster algebras called \defn{cluster algebras from surfaces}, given such a Laurent polynomial $x_\gamma$, it was shown in~\cite[Theorem~4.17]{MSW11} that $x_\gamma$ can be associated to a certain snake graph $G_\gamma$ and that $x_\gamma$ can be written as a sum over all perfect matchings of $G_\gamma$. In particular, the terms of $x_\gamma$ are in bijection with the perfect matchings of $G_\gamma$.

The following allows us to associate a snake graph to a binary word. 

\begin{definition}[{\cite[Section~2.1]{CS13}}]
\label{def:sign_function}
A \defn{sign function} on a snake graph $G$ is a map from the set of edges of G to 
$\{+,-\}$ 
such that, for every tile of $G$, the north edge and the west edge have the same sign, 
 the south edge and the east edge have the same sign, and 
 the sign on the north edge is opposite to the sign on the south edge.

\begin{figure}[ht]
\begin{center}
\def\figexamplesignfunctionscale{1}
\def\signsequencescale{1}
\newcommand\macropositivetile{
     \def\south{$-$}
     \def\east{$-$}
     \def\north{$+$}
     \def\west{$+$} 
     \draw (\x + 0, \y) --  (\x+1,\y) node [pos=0.5,above] {\south} -- (\x+1,\y+1) node [pos=0.5,left] {\east} -- (\x+0,\y+1) node [pos=0.5,above] {\north} --  (\x + 0, \y)  node [pos=0.5,left] {\west};
}
\newcommand\macropositivetileBinary{
     \def\south{$0$}
     \def\east{$0$}
     \def\north{$1$}
     \def\west{$1$} 
     \draw (\x + 0, \y) --  (\x+1,\y) node [pos=0.5,above] {\south} -- (\x+1,\y+1) node [pos=0.5,left] {\east} -- (\x+0,\y+1) node [pos=0.5,above] {\north} --  (\x + 0, \y)  node [pos=0.5,left] {\west};
}
\newcommand\macropositivetileBinarySouthOne{
     \def\south{$1$}
     \def\east{$1$}
     \def\north{$0$}
     \def\west{$0$} 
     \draw (\x + 0, \y) --  (\x+1,\y) node [pos=0.5,above] {\south} -- (\x+1,\y+1) node [pos=0.5,left] {\east} -- (\x+0,\y+1) node [pos=0.5,above] {\north} --  (\x + 0, \y)  node [pos=0.5,left] {\west};
}
\newcommand\macronegativetile{
     \def\south{$+$}
     \def\east{$+$}
     \def\north{$-$}
     \def\west{$-$} 
     \draw (\x + 0, \y) --  (\x+1,\y) node [pos=0.5,above] {\south} -- (\x+1,\y+1) node [pos=0.5,left] {\east} -- (\x+0,\y+1) node [pos=0.5,above] {\north} --  (\x + 0, \y)  node [pos=0.5,left] {\west};
}
\newcommand\macronegativetileBinary{
     \def\south{$1$}
     \def\east{$1$}
     \def\north{$0$}
     \def\west{$0$} 
     \draw (\x + 0, \y) --  (\x+1,\y) node [pos=0.5,above] {\south} -- (\x+1,\y+1) node [pos=0.5,left] {\east} -- (\x+0,\y+1) node [pos=0.5,above] {\north} --  (\x + 0, \y)  node [pos=0.5,left] {\west};
}
\newcommand\macronegativetileBinarySouthOne{
     \def\south{$0$}
     \def\east{$0$}
     \def\north{$1$}
     \def\west{$1$} 
     \draw (\x + 0, \y) --  (\x+1,\y) node [pos=0.5,above] {\south} -- (\x+1,\y+1) node [pos=0.5,left] {\east} -- (\x+0,\y+1) node [pos=0.5,above] {\north} --  (\x + 0, \y)  node [pos=0.5,left] {\west};
}
\begin{tikzpicture}[scale=\figexamplesignfunctionscale,font=\tiny]
 \foreach \x in {0,1,...,2} 
 {
\def\y{0}
     \ifthenelse{\x = 0 \OR \x =2}
     {
     \macropositivetile
     }{
     \macronegativetile
     }
 }

\def\y{1}
 \foreach \x in {2,...,5} 
 {
     \ifthenelse{\x = 3 \OR \x =5}
     {\macropositivetile}{\macronegativetile}

 } 
 
\def\y{2}
 \foreach \x in {5} 
 {
     \macronegativetile
 }  

\def\y{3}
\foreach \x in {5,6} 
{
     \ifthenelse{\x = 5}
     {\macropositivetile}{\macronegativetile}
 } 
\end{tikzpicture}
\hspace{-5mm}
 \begin{tikzpicture}[scale=\figexamplesignfunctionscale,font=\tiny]
 \foreach \x in {0,1,...,2} 
 {
\def\y{0}
     \ifthenelse{\x = 0 \OR \x =2}
     {
     \macropositivetileBinary
     }{
     \macronegativetileBinary
     }
 }

\def\y{1}
 \foreach \x in {2,...,5} 
 {
     \ifthenelse{\x = 3 \OR \x =5}
     {\macropositivetileBinary}{\macronegativetileBinary}

 } 
 
\def\y{2}
 \foreach \x in {5} 
 {
     \macronegativetileBinary
 }  

\def\y{3}
\foreach \x in {5,6} 
{
     \ifthenelse{\x = 5}
     {\macropositivetileBinary}{\macronegativetileBinary}
 } 
\end{tikzpicture}

 \begin{tikzpicture}[scale=\signsequencescale,font=\scriptsize]
\def\macropositivetileBinaryFirstTileWestEast{
     \def\south{}
     \def\east{\textcolor{blue}{$0$}}
     \def\north{}
     \def\west{\textcolor{blue}{\underline{$1$}}}
     \draw (\x + 0, \y) --  (\x+1,\y) node [pos=0.5,above=0pt] {\south} -- (\x+1,\y+1) node [pos=0.5,left] {\east} -- (\x+0,\y+1) node [pos=0.5,above=1mm] {\north} --  (\x + 0, \y)  node [pos=0.5,left=1mm] {\west};
}
\def\macropositivetileBinaryWestEast{
     \def\south{}
     \def\east{\textcolor{blue}{$0$}}
     \def\north{}
     \def\west{\textcolor{blue}{$1$}} 
     \draw (\x + 0, \y) --  (\x+1,\y) node [pos=0.5,above] {\south} -- (\x+1,\y+1) node [pos=0.5,left] {\east} -- (\x+0,\y+1) node [pos=0.5,above] {\north} --  (\x + 0, \y)  node [pos=0.5,left] {\west};
}
\def\macropositivetileBinaryWestNorth{
     \def\south{}
     \def\east{}
     \def\north{\textcolor{blue}{$1$}}
     \def\west{\textcolor{blue}{$1$}} 
     \draw (\x + 0, \y) --  (\x+1,\y) node [pos=0.5,above=0pt] {\south} -- (\x+1,\y+1) node [pos=0.5,left] {\east} -- (\x+0,\y+1) node [pos=0.5,above] {\north} --  (\x + 0, \y)  node [pos=0.5,left] {\west};
}
\def\macropositivetileBinarySouthEast{
     \def\south{\textcolor{blue}{$0$}}
     \def\east{\textcolor{blue}{$0$}}
     \def\north{}
     \def\west{} 
     \draw (\x + 0, \y) --  (\x+1,\y) node [pos=0.5,above] {\south} -- (\x+1,\y+1) node [pos=0.5,left] {\east} -- (\x+0,\y+1) node [pos=0.5,above] {\north} --  (\x + 0, \y)  node [pos=0.5,left] {\west};
}
\def\macropositivetileBinarySouthNorth{
     \def\south{\textcolor{blue}{$0$}}
     \def\east{}
     \def\north{\textcolor{blue}{$1$}}
     \def\west{} 
     \draw (\x + 0, \y) --  (\x+1,\y) node [pos=0.5,above] {\south} -- (\x+1,\y+1) node [pos=0.5,left] {\east} -- (\x+0,\y+1) node [pos=0.5,above] {\north} --  (\x + 0, \y)  node [pos=0.5,left] {\west};
}
\def\macronegativetileBinary{
     \def\south{$1$}
     \def\east{$1$}
     \def\north{$0$}
     \def\west{$0$} 
     \draw (\x + 0, \y) --  (\x+1,\y) node [pos=0.5,above] {\south} -- (\x+1,\y+1) node [pos=0.5,left] {\east} -- (\x+0,\y+1) node [pos=0.5,above] {\north} --  (\x + 0, \y)  node [pos=0.5,left] {\west};
}
\def\macronegativetileBinaryWestEast{
     \def\south{}
     \def\east{\textcolor{blue}{$1$}}
     \def\north{}
     \def\west{\textcolor{blue}{$0$}}
     \draw (\x + 0, \y) --  (\x+1,\y) node [pos=0.5,above] {\south} -- (\x+1,\y+1) node [pos=0.5,left] {\east} -- (\x+0,\y+1) node [pos=0.5,above] {\north} --  (\x + 0, \y)  node [pos=0.5,left] {\west};
}
\def\macronegativetileBinaryWestNorth{
     \def\south{}
     \def\east{}
     \def\north{\textcolor{blue}{$0$}}
     \def\west{\textcolor{blue}{$0$}} 
     \draw (\x + 0, \y) --  (\x+1,\y) node [pos=0.5,above] {\south} -- (\x+1,\y+1) node [pos=0.5,left] {\east} -- (\x+0,\y+1) node [pos=0.5,above] {\north} --  (\x + 0, \y)  node [pos=0.5,left] {\west};
}
\def\macronegativetileBinarySouthEast{
     \def\south{\textcolor{blue}{$1$}}
     \def\east{\textcolor{blue}{$1$}}
     \def\north{}
     \def\west{} 
     \draw (\x + 0, \y) --  (\x+1,\y) node [pos=0.5,above] {\south} -- (\x+1,\y+1) node [pos=0.5,left] {\east} -- (\x+0,\y+1) node [pos=0.5,above] {\north} --  (\x + 0, \y)  node [pos=0.5,left] {\west};
}
\def\macronegativetileBinarySouthNorth{
     \def\south{\textcolor{blue}{$1$}}
     \def\east{}
     \def\north{\textcolor{blue}{$0$}}
     \def\west{} 
     \draw (\x + 0, \y) --  (\x+1,\y) node [pos=0.5,above] {\south} -- (\x+1,\y+1) node [pos=0.5,left] {\east} -- (\x+0,\y+1) node [pos=0.5,above] {\north} --  (\x + 0, \y)  node [pos=0.5,left] {\west};
}
\def\macronegativetileBinaryWest{ 
     \def\south{}
     \def\east{}
     \def\north{}
     \def\west{\textcolor{blue}{$0$}} 
     \draw (\x + 0, \y) --  (\x+1,\y) node [pos=0.5,above] {\south} -- (\x+1,\y+1) node [pos=0.5,left] {\east} -- (\x+0,\y+1) node [pos=0.5,above] {\north} --  (\x + 0, \y)  node [pos=0.5,left] {\west};
}

 \foreach \x in {0,1,...,2} 
 {
\def\y{0}
     \ifthenelse{\x = 0}{\macropositivetileBinaryFirstTileWestEast}{}
     \ifthenelse{\x = 1}{\macronegativetileBinaryWestEast}{}
     \ifthenelse{\x =2}{\macropositivetileBinaryWestNorth}{}
 }

\def\y{1}
 \foreach \x in {2,...,5} 
 {
     \ifthenelse{\x = 2}
     {\macronegativetileBinarySouthEast}{}
     \ifthenelse{\x = 3}
     {\macropositivetileBinaryWestEast}{}
     \ifthenelse{\x = 4}
     {\macronegativetileBinaryWestEast}{}
     \ifthenelse{\x = 5}
     {\macropositivetileBinaryWestNorth}{}
 } 
 
\def\y{2}
 \foreach \x in {5} 
 {
     \macronegativetileBinarySouthNorth
 }  

\def\y{3}
\foreach \x in {5,6} 
{
     \ifthenelse{\x = 5}{\macropositivetileBinarySouthEast}{}
     \ifthenelse{\x = 6}{\macronegativetileBinaryWest}{}
 } 
 
\end{tikzpicture}
\caption{
Sign function 
(top) and sign sequence (bottom) of the snake graph corresponding to the binary expansion $\textcolor{blue}{\underline{1}011101100}$}
\label{fig:example:sign_function}
\label{fig:signsequence}
\end{center}
\end{figure}
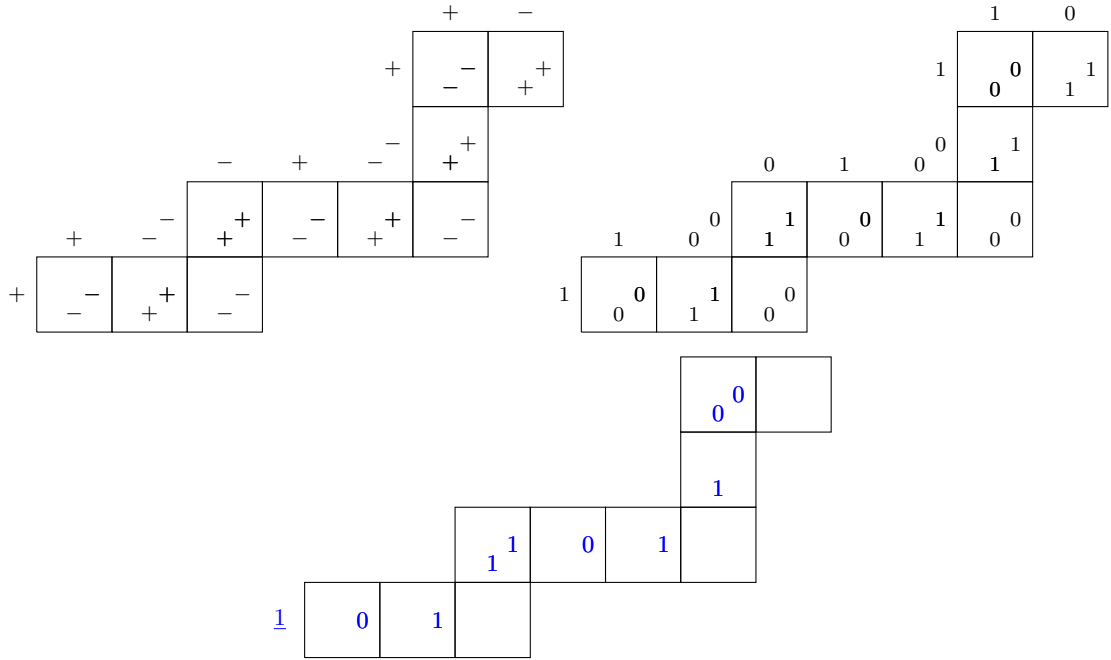
\end{definition}

Note that there are exactly two sign functions on every snake graph. 
We consider only the sign function where the south edge of the first tile $G_1$ has label $-$, see Fig.~\ref{fig:example:sign_function} (top left). 
Since we study binary expansions, we replace $+$ with $1$ and $-$ with $0$, see Fig.~\ref{fig:example:sign_function} (top right).

Given the sign function of a snake graph $G$ whose west edge of the first tile $G_1$ 
 has sign~$1$, 
let the \defn{sign sequence} of $G$ be the sequence $(1,w_2,\dots,w_d)$ where 
$w_2, \dots, w_d$ 
are the signs of the interior edges of the snake graph, see Fig.~\ref{fig:signsequence} (bottom). 
As this sequence uniquely determines a snake graph, we can associate to each binary word $w=1w_2\dots w_d$ a snake graph $G(w)$. 

\subsection{Bijection from subwords to perfect matchings}
An \defn{order filter} (or \defn{upper order ideal}) 
is 
a subset $F$ of $P$ such that if $t \in F$ and $s \geq t$, then $s \in F$.
The set of perfect matchings of a snake graph $G$ is known to form a distributive lattice 
which is 
 isomorphic to the lattice of order filters of the piecewise-linear poset corresponding to $G$~ \cite[Section~5]{MSW13}
 (as a consequence of~\cite[Theorem~2]{Pro02}, see also~\cite[Theorem~3]{Fel04}).
It is also known that the map which sends an order filter to its set of minimal elements 
is a bijection between the order filters and the antichains of a poset. 
Therefore, by Theorem~\ref{thm:antichain_to_subword}, there is a bijection from the subwords of $w$ to the perfect matchings of $G(w)$.

\begin{theorem}
\label{thm:subwords_to_snake_graph_matchings}
Given a binary word $w$ and its corresponding snake graph $G=G(w)$,  
the following map $pm$ from the subwords of $w$ to the perfect matchings of $G$ is a bijection. 
\begin{enumerate}[(step 1)]
\item 
Let $s$ be a subword of $w$. 
If $s$ is the empty word, let $pm(s)$ be $pm_{\text{min}}$. 
Otherwise, write $s=w_{i_1}w_{i_2} \dots w_{i_\ell}$ in such a way that each index $i_k$ is as small as possible (as we do in Section~\ref{sec:bijection_antichains_subwords}) and 
circle the edges of $G$ corresponding to the sign sequence for $s$.

\item \label{itm:step2}
For each block $L$ 
of consecutive circled edges, let $\square_L$ be the tile which is immediately north/east of the last edge in $L$. 

\item \label{itm:step3}
Let $fil(\square_L)$ be 
the smallest connected sequence of tiles such that $\square_L \in fil(\square_L)$ and the set of edges bounding $fil(\square_L)$ not in $pm_{\text{min}}$ forms a perfect matching of $fil(\square_L)$. 
Set $ fil(s)= \bigcup_{L} fil(\square_L)$.

\item 
Define $pm(s)$ to be the symmetric difference $\{$edges bounding $fil(s)\} \ominus pm_{\text{min}}$.  
\end{enumerate}
\end{theorem}

Let $P$ be the piecewise-linear poset associated to $w$, and associate the $i$th tile of $G$ with the $i$th element of $P$. 
In step~\ref{itm:step2}, the tiles $\square_L$ correspond to the antichain $A$ of $P$ coming from $s$. 
The tiles in $fil(s)$ in step~\ref{itm:step3} correspond to the order filter of $P$ generated by $A$.
In the following example, 
we 
construct the 
perfect matchings corresponding to 
 the two antichains in Fig.~\ref{fig:example:antichain_to_subword}.

\def\myscalematching{0.6}
\begin{figure}[htb]
	\begin{center}
 \begin{tikzpicture}[ xscale=0.65, yscale=0.65, 
 font=\scriptsize]
\newcommand\macrotile[1]{
     \def\south{}
     \def\east{}
     \def\north{}
     \def\west{} 
     \draw (\x + 0, \y) --  (\x+1,\y) node [pos=0.5,above] {\south} -- (\x+1,\y+1) node [pos=0.5,left] {\east} -- (\x+0,\y+1) node [pos=0.5,above] {\north} --  (\x + 0, \y)  node [pos=0.5,left] {\west};
}
\newcommand\macropositivetileBinary{
     \def\south{\textcolor{black}{$0$}}
     \def\east{\textcolor{black}{$0$}}
     \def\north{\textcolor{black}{$1$}}
     \def\west{\textcolor{black}{$1$}}
     \draw[gray] (\x + 0, \y) --  (\x+1,\y) node [pos=0.5,above] {\south} -- (\x+1,\y+1) node [pos=0.5,left] {\east} -- (\x+0,\y+1) node [pos=0.5,above] {\north} --  (\x + 0, \y)  node [pos=0.5,left] {\west};
}
\newcommand\macropositivetileBinaryWestEast{
     \def\south{}
     \def\east{\textcolor{blue}{\circled{$0$}}}
     \def\north{}
     \def\west{\textcolor{blue}{\circled{$1$}}}
     \draw (\x + 0, \y) --  (\x+1,\y) node [pos=0.5] {\south} -- (\x+1,\y+1) node [pos=0.5] {\east} -- (\x+0,\y+1) node [pos=0.5] {\north} --  (\x + 0, \y)  node [pos=0.5] {\west};
}
\newcommand\macropositivetileBinaryEast{
     \def\south{}
     \def\east{\textcolor{blue}{\circled{$0$}}}
     \def\north{}
     \def\west{{$1$}} 
     \draw (\x + 0, \y) --  (\x+1,\y) node [pos=0.5] {\south} -- (\x+1,\y+1) node [pos=0.5] {\east} -- (\x+0,\y+1) node [pos=0.5] {\north} --  (\x + 0, \y)  node [pos=0.5] {\west};
}
\newcommand\macropositivetileBinaryWestNorth{
     \def\south{}
     \def\east{}
     \def\north{\textcolor{blue}{\circled{$1$}}}
     \def\west{\textcolor{blue}{\circled{$1$}}} 
     \draw (\x + 0, \y) --  (\x+1,\y) node [pos=0.5] {\south} -- (\x+1,\y+1) node [pos=0.5] {\east} -- (\x+0,\y+1) node [pos=0.5] {\north} --  (\x + 0, \y)  node [pos=0.5] {\west};
}
\newcommand\macropositivetileBinarySouthEast{
     \def\south{\textcolor{blue}{\circled{$0$}}}
     \def\east{\textcolor{blue}{\circled{$0$}}}
     \def\north{}
     \def\west{} 
     \draw (\x + 0, \y) --  (\x+1,\y) node [pos=0.5] {\south} -- (\x+1,\y+1) node [pos=0.5] {\east} -- (\x+0,\y+1) node [pos=0.5] {\north} --  (\x + 0, \y)  node [pos=0.5] {\west};
}
\newcommand\macropositivetileBinarySouthNorth{
     \def\south{\textcolor{blue}{\circled{$0$}}}
     \def\east{}
     \def\north{\textcolor{blue}{\circled{$1$}}}
     \def\west{} 
     \draw (\x + 0, \y) --  (\x+1,\y) node [pos=0.5] {\south} -- (\x+1,\y+1) node [pos=0.5] {\east} -- (\x+0,\y+1) node [pos=0.5] {\north} --  (\x + 0, \y)  node [pos=0.5] {\west};
}

\newcommand\macronegativetileBinary{
     \def\south{\textcolor{black}{$1$}}
     \def\east{\textcolor{black}{$1$}}
     \def\north{\textcolor{black}{$0$}}
     \def\west{\textcolor{black}{$0$}}
     \draw (\x + 0, \y) --  (\x+1,\y) node [pos=0.5] {\south} -- (\x+1,\y+1) node [pos=0.5] {\east} -- (\x+0,\y+1) node [pos=0.5] {\north} --  (\x + 0, \y)  node [pos=0.5] {\west};
}
\newcommand\macronegativetileBinaryWestEast{
     \def\south{}
     \def\east{\textcolor{blue}{\circled{$1$}}}
     \def\north{}
     \def\west{\textcolor{blue}{\circled{$0$}}}
     \draw (\x + 0, \y) --  (\x+1,\y) node [pos=0.5] {\south} -- (\x+1,\y+1) node [pos=0.5] {\east} -- (\x+0,\y+1) node [pos=0.5] {\north} --  (\x + 0, \y)  node [pos=0.5] {\west};
}
\newcommand\macronegativetileBinaryWestNorth{
     \def\south{}
     \def\east{}
     \def\north{\textcolor{blue}{\circled{$0$}}}
     \def\west{\textcolor{blue}{\circled{$0$}}}
     \draw (\x + 0, \y) --  (\x+1,\y) node [pos=0.5] {\south} -- (\x+1,\y+1) node [pos=0.5] {\east} -- (\x+0,\y+1) node [pos=0.5] {\north} --  (\x + 0, \y)  node [pos=0.5] {\west};
}
\newcommand\macronegativetileBinarySouthEast{
     \def\south{\textcolor{blue}{\circled{$1$}}}
     \def\east{\textcolor{blue}{\circled{$1$}}}
     \def\north{}
     \def\west{} 
     \draw (\x + 0, \y) --  (\x+1,\y) node [pos=0.5,above] {\south} -- (\x+1,\y+1) node [pos=0.5,left] {\east} -- (\x+0,\y+1) node [pos=0.5,above] {\north} --  (\x + 0, \y)  node [pos=0.5,left] {\west};
}

\newcommand\macronegativetileBinarySouthNorth{
     \def\south{\textcolor{blue}{\circled{$1$}}}
     \def\east{}
     \def\north{\textcolor{blue}{\circled{$0$}}}
     \def\west{} 
     \draw (\x + 0, \y) --  (\x+1,\y) node [pos=0.5] {\south} -- (\x+1,\y+1) node [pos=0.5] {\east} -- (\x+0,\y+1) node [pos=0.5] {\north} --  (\x + 0, \y)  node [pos=0.5] {\west};
}
\newcommand\macronegativetileBinaryWest{ 
     \def\south{}
     \def\east{}
     \def\north{}
     \def\west{\textcolor{blue}{\circled{$0$}}}
     \draw (\x + 0, \y) --  (\x+1,\y) node [pos=0.5] {\south} -- (\x+1,\y+1) node [pos=0.5] {\east} -- (\x+0,\y+1) node [pos=0.5] {\north} --  (\x + 0, \y)  node [pos=0.5] {\west};
}

 \foreach \x in {0,1,...,2} 
 {
\def\y{0}
     \ifthenelse{\x = 0}{\macropositivetileBinaryWestEast}{}
     \ifthenelse{\x = 1}{\macronegativetileBinaryWestEast}{}
     \ifthenelse{\x =2}{\macropositivetileBinaryWestNorth}{}
 }

\def\y{1}
 \foreach \x in {2,...,5} 
 {
     \ifthenelse{\x = 2}
     {\macrotile{} \draw (\x + 0.5, \y+0.5) node {\boxed{$d$}}; }{}
     \ifthenelse{\x = 3}
     {\macropositivetileBinaryEast}{}
     \ifthenelse{\x = 4}
     {\macronegativetileBinaryWestEast}{}
     \ifthenelse{\x = 5}
     {\macropositivetileBinaryWestNorth}{}
 } 
 
\def\y{2}
 \foreach \x in {5} 
 {
     \macronegativetileBinarySouthNorth
 }  

\def\y{3}
\foreach \x in {5,6} 
{
     \ifthenelse{\x = 5}{\macropositivetileBinarySouthEast}{}
     \ifthenelse{\x = 6}{\macronegativetileBinaryWest \draw (\x + 0.5, \y+0.5) node {$\boxed{j}$};}{}
 } 
 
\end{tikzpicture}
		\hspace{-2.5cm}
		$\square_{\textcolor{blue}{1011}}$=$\boxed{d}$ and $\square_{\textcolor{blue}{01100}}$=$\boxed{j}$ 
		\hspace{-0.5cm}
\def\macrofilledtile{
\draw [fill=lightgray,opacity=0.99]  (\x + 0, \y) rectangle (\x+1,\y+1);
}
\def\macropositivetile{
     \def\south{}
     \def\east{}
     \def\north{}
     \def\west{} 
     \draw (\x + 0, \y) --  (\x+1,\y) node [pos=0.5,above] {\south} -- (\x+1,\y+1) node [pos=0.5,left] {\east} -- (\x+0,\y+1) node [pos=0.5,above] {\north} --  (\x + 0, \y)  node [pos=0.5,left] {\west};
}
\def\macronegativetile{
     \def\south{}
     \def\east{}
     \def\north{}
     \def\west{} 
     \draw (\x + 0, \y) --  (\x+1,\y) node [pos=0.5,above] {\south} -- (\x+1,\y+1) node [pos=0.5,left] {\east} -- (\x+0,\y+1) node [pos=0.5,above] {\north} --  (\x + 0, \y)  node [pos=0.5,left] {\west};
}
\def\matchingcolor{2pt}
 \begin{tikzpicture}[scale=\myscalematching,font=\footnotesize]
 \foreach \x in {0,1,...,2} 
 {
\def\y{0}
     \ifthenelse{\x = 0 \OR \x =2}
     {
     \macropositivetile
     }{
     \macronegativetile
     }
     \ifthenelse{\x = 0 \OR \x =2}{\draw[line width=\matchingcolor] (\x + 0, \y+0) -- (\x+1,\y+0);}{} 
     \ifthenelse{\x = 0}{\draw[line width=\matchingcolor] (\x + \y+0, \y+1) -- (\x+1,\y+1);}{} 
 }
\def\y{1}
 \foreach \x in {2,...,5} 
 {
     \ifthenelse{\x = 2 \OR \x = 3}{\macrofilledtile}{}
     \ifthenelse{\x = 2}
     {\draw (\x + 0.5, \y+0.5) node {\boxed{$d$}}; }{}
     \ifthenelse{\x = 3}
     {\draw (\x + 0.5, \y+0.5) node {{$e$}}; }{}
     \ifthenelse{\x = 3 \OR \x =5}
     {\macropositivetile}{\macronegativetile}
     \ifthenelse{\x = 2}{\draw[line width=\matchingcolor] (\x+0, \y+0) -- (\x, \y+1);}{} 
     \ifthenelse{\x = 2}{\draw[red, densely dashed, line width=1.5*\matchingcolor] (\x+0, \y+0) -- (\x+1, \y+0);}{} 
     \ifthenelse{\x = 2}{\draw[red, densely dashed, line width=1.5*\matchingcolor] (\x+0, \y+1) -- (\x+1,\y+1);}{} 
      \ifthenelse{\x = 3}{\draw[red, densely dashed, line width=1.5*\matchingcolor] (\x+1, \y) -- (\x+1,\y+1);}{} 
      \ifthenelse{\x = 3}{\draw[line width=\matchingcolor] (\x+0, \y+0) -- (\x+1, \y+0);}{} 
     \ifthenelse{\x = 3}{\draw[line width=\matchingcolor] (\x+0, \y+1) -- (\x+1,\y+1);}{} 
      \ifthenelse{\x =5}{\draw[line width=\matchingcolor] (\x+0, \y+0) -- (\x+1, \y+0);}{} 
 } 
\def\y{2}
 \foreach \x in {5} 
 {
     \macrofilledtile
     \macronegativetile
     \ifthenelse{\x = 5}{\draw[line width=\matchingcolor] (\x+0, \y+0) -- (\x, \y+1);}{} 
     \ifthenelse{\x = 5}{\draw[line width=\matchingcolor] (\x+1, \y) -- (\x+1,\y+1);}{} 
     \ifthenelse{\x = 5}{\draw[red,densely dashed,line width=1.5*\matchingcolor] (\x+0, \y+0) -- (\x+1, \y+0);}{} 
     \draw (\x + 0.5, \y+0.5) node {{$h$}};
 }  
\def\y{3}
\foreach \x in {5,6} 
{
    \macrofilledtile
     \ifthenelse{\x = 5}
     {\macropositivetile}{\macronegativetile}
     \ifthenelse{\x = 5}{\draw[line width=\matchingcolor] (\x+0, \y+1) -- (\x+1,\y+1);}{} 
     \ifthenelse{\x = 5}{\draw[red,densely dashed,line width=1.5*\matchingcolor] (\x+0, \y+0) -- (\x, \y+1);}{} 

     \ifthenelse{\x = 6}{\draw[line width=\matchingcolor] (\x+1, \y) -- (\x+1,\y+1);}{} 
     \ifthenelse{\x = 6}{\draw[red,densely dashed,line width=1.5*\matchingcolor] (\x+0, \y+0) -- (\x+1, \y+0);}{} 
     \ifthenelse{\x = 6}{\draw[red,densely dashed,line width=1.5*\matchingcolor] (\x+0, \y+1) -- (\x+1,\y+1);}{} 
     \ifthenelse{\x = 5}{\draw (\x + 0.5, \y+0.5) node {{$i$}}; }{}
     \ifthenelse{\x = 6}{\draw (\x + 0.5, \y+0.5) node {\boxed{$j$}}; }{}
 } 
\end{tikzpicture} 
		\hspace{-1.4cm}
\def\macrofilledtile{
\draw [fill=lightgray,opacity=0.99]  (\x + 0, \y) rectangle (\x+1,\y+1);
}
\def\macropositivetile{
     \def\south{}
     \def\east{}
     \def\north{}
     \def\west{} 
     \draw (\x + 0, \y) --  (\x+1,\y) node [pos=0.5,above] {\south} -- (\x+1,\y+1) node [pos=0.5,left] {\east} -- (\x+0,\y+1) node [pos=0.5,above] {\north} --  (\x + 0, \y)  node [pos=0.5,left] {\west};
}
\def\macronegativetile{
     \def\south{}
     \def\east{}
     \def\north{}
     \def\west{} 
     \draw (\x + 0, \y) --  (\x+1,\y) node [pos=0.5,above] {\south} -- (\x+1,\y+1) node [pos=0.5,left] {\east} -- (\x+0,\y+1) node [pos=0.5,above] {\north} --  (\x + 0, \y)  node [pos=0.5,left] {\west};
}
\def\matchingcolor{2pt}
 \begin{tikzpicture}[scale=\myscalematching,font=\footnotesize]
 \foreach \x in {0,1,...,2} 
 {
\def\y{0}
     \ifthenelse{\x = 0 \OR \x =2}
     {
     \macropositivetile
     }{
     \macronegativetile
     }
     \ifthenelse{\x = 0 \OR \x =2}{\draw[line width=\matchingcolor] (\x + 0, \y+0) -- (\x+1,\y+0);}{} 
     \ifthenelse{\x = 0}{\draw[line width=\matchingcolor] (\x + \y+0, \y+1) -- (\x+1,\y+1);}{} 
 }
\def\y{1}
 \foreach \x in {2,...,5} 
 {
     \ifthenelse{\x = 2 \OR \x = 3}{\macrofilledtile}{}
     \ifthenelse{\x = 3 \OR \x =5}{\macropositivetile}{\macronegativetile}
     \ifthenelse{\x = 2}{\draw[red, 
     line width=1.5*\matchingcolor] (\x+0, \y+0) -- (\x+1, \y+0);}{} 
     \ifthenelse{\x = 2}{\draw[red, 
     line width=1.5*\matchingcolor] (\x+0, \y+1) -- (\x+1,\y+1);}{} 
      \ifthenelse{\x = 3}{\draw[red, 
      line width=1.5*\matchingcolor] (\x+1, \y) -- (\x+1,\y+1);}{} 
      \ifthenelse{\x =5}{\draw[line width=\matchingcolor] (\x+0, \y+0) -- (\x+1, \y+0);}{} 
 } 
\def\y{2}
 \foreach \x in {5} 
 {
     \macrofilledtile
     \macronegativetile
     \ifthenelse{\x = 5}{\draw[red,
     line width=1.5*\matchingcolor] (\x+0, \y+0) -- (\x+1, \y+0);}{} 
 }  
\def\y{3}
\foreach \x in {5,6} 
{
    \macrofilledtile
     \ifthenelse{\x = 5}
     {\macropositivetile}{\macronegativetile}
     \ifthenelse{\x = 5}{\draw[red,
     line width=1.5*\matchingcolor] (\x+0, \y+0) -- (\x, \y+1);}{} 

     \ifthenelse{\x = 6}{\draw[red,
     line width=1.5*\matchingcolor] (\x+0, \y+0) -- (\x+1, \y+0);}{} 
     \ifthenelse{\x = 6}{\draw[red,
     line width=1.5*\matchingcolor] (\x+0, \y+1) -- (\x+1,\y+1);}{} 
 } 
\end{tikzpicture} 
		\caption{Tiles $\square_L$ associated to blocks $L$ of circled edges (left); 
			the set $fil(s)$ of shaded tiles and the set $pm(s)$  of thick solid edges (right) for the subword 
			s=\textcolor{blue}{$\boxed{1011}\boxed{01100}$} 
		}
		\label{fig:square_L:s101101100}
		\label{fig:example:pm_dk}
 \begin{tikzpicture}[xscale=0.65, yscale=0.65,font=\scriptsize]
\newcommand\macrotile[1]{
     \def\south{}
     \def\east{}
     \def\north{}
     \def\west{} 
     \draw (\x + 0, \y) --  (\x+1,\y) node [pos=0.5,above] {\south} -- (\x+1,\y+1) node [pos=0.5,left] {\east} -- (\x+0,\y+1) node [pos=0.5,above] {\north} --  (\x + 0, \y)  node [pos=0.5,left] {\west};
}
\newcommand\macropositivetileBinaryWestEast{
     \def\south{}
     \def\east{\textcolor{blue}{\circled{$0$}}}
     \def\north{}
     \def\west{\textcolor{blue}{\circled{$1$}}}
     \draw (\x + 0, \y) --  (\x+1,\y) node [pos=0.5] {\south} -- (\x+1,\y+1) node [pos=0.5] {\east} -- (\x+0,\y+1) node [pos=0.5] {\north} --  (\x + 0, \y)  node [pos=0.5] {\west};
}
\newcommand\macropositivetileBinaryWest{
     \def\south{}
     \def\east{{$0$}}
     \def\north{}
     \def\west{\textcolor{blue}{\circled{$1$}}}
     \draw (\x + 0, \y) --  (\x+1,\y) node [pos=0.5] {\south} -- (\x+1,\y+1) node [pos=0.5] {\east} -- (\x+0,\y+1) node [pos=0.5] {\north} --  (\x + 0, \y)  node [pos=0.5] {\west};
}
\newcommand\macropositivetileBinaryEast{
     \def\south{}
     \def\east{\textcolor{blue}{\circled{$0$}}}
     \def\north{}
     \def\west{{$1$}} 
     \draw (\x + 0, \y) --  (\x+1,\y) node [pos=0.5] {\south} -- (\x+1,\y+1) node [pos=0.5] {\east} -- (\x+0,\y+1) node [pos=0.5] {\north} --  (\x + 0, \y)  node [pos=0.5] {\west};
}
\newcommand\macropositivetileBinaryWestNorth{
     \def\south{}
     \def\east{}
     \def\north{\textcolor{blue}{\circled{$1$}}}
     \def\west{\textcolor{blue}{\circled{$1$}}} 
     \draw (\x + 0, \y) --  (\x+1,\y) node [pos=0.5] {\south} -- (\x+1,\y+1) node [pos=0.5] {\east} -- (\x+0,\y+1) node [pos=0.5] {\north} --  (\x + 0, \y)  node [pos=0.5] {\west};
}
\newcommand\macropositivetileBinarySouthEast{
     \def\south{\textcolor{blue}{\circled{$0$}}}
     \def\east{\textcolor{blue}{\circled{$0$}}}
     \def\north{}
     \def\west{} 
     \draw (\x + 0, \y) --  (\x+1,\y) node [pos=0.5] {\south} -- (\x+1,\y+1) node [pos=0.5] {\east} -- (\x+0,\y+1) node [pos=0.5] {\north} --  (\x + 0, \y)  node [pos=0.5] {\west};
}
\newcommand\macropositivetileBinarySouth{
     \def\south{\textcolor{blue}{\circled{$0$}}}
     \def\east{}
     \def\north{}
     \def\west{} 
     \draw (\x + 0, \y) --  (\x+1,\y) node [pos=0.5] {\south} -- (\x+1,\y+1) node [pos=0.5] {\east} -- (\x+0,\y+1) node [pos=0.5] {\north} --  (\x + 0, \y)  node [pos=0.5] {\west};
}
\newcommand\macropositivetileBinarySouthNorth{
     \def\south{\textcolor{blue}{\circled{$0$}}}
     \def\east{}
     \def\north{\textcolor{blue}{\circled{$1$}}}
     \def\west{} 
     \draw (\x + 0, \y) --  (\x+1,\y) node [pos=0.5] {\south} -- (\x+1,\y+1) node [pos=0.5] {\east} -- (\x+0,\y+1) node [pos=0.5] {\north} --  (\x + 0, \y)  node [pos=0.5] {\west};
}

\newcommand\macronegativetileBinary{
     \def\south{$1$}
     \def\east{$1$}
     \def\north{$0$}
     \def\west{$0$} 
     \draw (\x + 0, \y) --  (\x+1,\y) node [pos=0.5] {\south} -- (\x+1,\y+1) node [pos=0.5] {\east} -- (\x+0,\y+1) node [pos=0.5] {\north} --  (\x + 0, \y)  node [pos=0.5] {\west};
}
\newcommand\macronegativetileBinaryWestEast{
     \def\south{}
     \def\east{\textcolor{blue}{\circled{$1$}}}
     \def\north{}
     \def\west{\textcolor{blue}{\circled{$0$}}}
     \draw (\x + 0, \y) --  (\x+1,\y) node [pos=0.5] {\south} -- (\x+1,\y+1) node [pos=0.5] {\east} -- (\x+0,\y+1) node [pos=0.5] {\north} --  (\x + 0, \y)  node [pos=0.5] {\west};
}
\newcommand\macronegativetileBinaryEast{
     \def\south{}
     \def\east{\textcolor{blue}{\circled{$1$}}}
     \def\north{}
     \def\west{{$0$}}
     \draw (\x + 0, \y) --  (\x+1,\y) node [pos=0.5] {\south} -- (\x+1,\y+1) node [pos=0.5] {\east} -- (\x+0,\y+1) node [pos=0.5] {\north} --  (\x + 0, \y)  node [pos=0.5] {\west};
}
\newcommand\macronegativetileBinaryWestNorth{
     \def\south{}
     \def\east{}
     \def\north{\textcolor{blue}{\circled{$0$}}}
     \def\west{\textcolor{blue}{\circled{$0$}}}
     \draw (\x + 0, \y) --  (\x+1,\y) node [pos=0.5] {\south} -- (\x+1,\y+1) node [pos=0.5] {\east} -- (\x+0,\y+1) node [pos=0.5] {\north} --  (\x + 0, \y)  node [pos=0.5] {\west};
}
\newcommand\macronegativetileBinarySouthEast{
     \def\south{\textcolor{blue}{\circled{$1$}}}
     \def\east{\textcolor{blue}{\circled{$1$}}}
     \def\north{}
     \def\west{} 
     \draw (\x + 0, \y) --  (\x+1,\y) node [pos=0.5,above] {\south} -- (\x+1,\y+1) node [pos=0.5,left] {\east} -- (\x+0,\y+1) node [pos=0.5,above] {\north} --  (\x + 0, \y)  node [pos=0.5,left] {\west};
}
\newcommand\macronegativetileBinarySouthEastNotcircled{
     \def\south{\textcolor{black}{{$1$}}}
     \def\east{\textcolor{black}{{$1$}}}
     \def\north{}
     \def\west{} 
     \draw (\x + 0, \y) --  (\x+1,\y) node [pos=0.5,above] {\south} -- (\x+1,\y+1) node [pos=0.5] {\east} -- (\x+0,\y+1) node [pos=0.5,above] {\north} --  (\x + 0, \y)  node [pos=0.5] {\west};
}

\newcommand\macronegativetileBinarySouthNorth{
     \def\south{\textcolor{blue}{\circled{$1$}}}
     \def\east{}
     \def\north{\textcolor{blue}{\circled{$0$}}}
     \def\west{} 
     \draw (\x + 0, \y) --  (\x+1,\y) node [pos=0.5] {\south} -- (\x+1,\y+1) node [pos=0.5] {\east} -- (\x+0,\y+1) node [pos=0.5] {\north} --  (\x + 0, \y)  node [pos=0.5] {\west};
}
\newcommand\macronegativetileBinaryNorthNotcircled{
     \def\south{\textcolor{black}{{$1$}}}
     \def\east{}
     \def\north{\textcolor{black}{{$0$}}}
     \def\west{} 
     \draw (\x + 0, \y) --  (\x+1,\y) node [pos=0.5] {\south} -- (\x+1,\y+1) node [pos=0.5] {\east} -- (\x+0,\y+1) node [pos=0.5] {\north} --  (\x + 0, \y)  node [pos=0.5] {\west};
}
\newcommand\macronegativetileBinaryWest{ 
     \def\south{}
     \def\east{}
     \def\north{}
     \def\west{\textcolor{blue}{\circled{$0$}}}
     \draw (\x + 0, \y) --  (\x+1,\y) node [pos=0.5] {\south} -- (\x+1,\y+1) node [pos=0.5] {\east} -- (\x+0,\y+1) node [pos=0.5] {\north} --  (\x + 0, \y)  node [pos=0.5] {\west};
}
\newcommand\macronegativetileBinaryWestNotcircled{ 
     \def\south{}
     \def\east{}
     \def\north{}
     \def\west{\textcolor{black}{{$0$}}}
     \draw (\x + 0, \y) --  (\x+1,\y) node [pos=0.5] {\south} -- (\x+1,\y+1) node [pos=0.5] {\east} -- (\x+0,\y+1) node [pos=0.5] {\north} --  (\x + 0, \y)  node [pos=0.5] {\west};
}

 \foreach \x in {0,1,...,2} 
 {
\def\y{0}
     \ifthenelse{\x = 0}{\macropositivetileBinaryWest      \draw (\x + 0.5, \y+0.5) node {$\boxed{a}$};}{}
     \ifthenelse{\x = 1}{\macronegativetileBinaryEast}{}
     \ifthenelse{\x =2}{\macrotile{}
     \draw (\x + 0.5, \y+0.5) node {$\boxed{c}$};}{}
 }

\def\y{1}
 \foreach \x in {2,...,5} 
 {
     \ifthenelse{\x = 2}
     {\macronegativetileBinarySouthEastNotcircled }{}
     \ifthenelse{\x = 3}
     {\macropositivetileBinaryEast}{}
     \ifthenelse{\x = 4}
     {\macronegativetileBinaryWestEast}{}
     \ifthenelse{\x = 5}
     {\macrotile{}
     \draw (\x + 0.5, \y+0.5) node {$\boxed{g}$};
     }{}
 } 
 
\def\y{2}
 \foreach \x in {5} 
 {
     \macronegativetileBinaryNorthNotcircled{}
 }  

\def\y{3}
\foreach \x in {5,6} 
{
     \ifthenelse{\x = 5}{\macropositivetileBinarySouth
     \draw (\x + 0.5, \y+0.5) node {$\boxed{i}$};}{}
     \ifthenelse{\x = 6}{\macronegativetileBinaryWestNotcircled }{}
 } 
 
\end{tikzpicture}
		\hspace{-34mm}
		{\small $\square_{\textcolor{blue}{1}}$=$\boxed{a}$, $\square_{\textcolor{blue}{1}}$=$\boxed{c}$ , $\square_{\textcolor{blue}{01}}$=$\boxed{g}$, $\square_{\textcolor{blue}{0}}$=$\boxed{i}$}
		\hspace{-7mm}
\def\macrofilledtile{
\draw [fill=lightgray,opacity=0.99]  (\x + 0, \y) rectangle (\x+1,\y+1);
}
\def\macropositivetile{
     \def\south{}
     \def\east{}
     \def\north{}
     \def\west{} 
     \draw (\x + 0, \y) --  (\x+1,\y) node [pos=0.5,above] {\south} -- (\x+1,\y+1) node [pos=0.5,left] {\east} -- (\x+0,\y+1) node [pos=0.5,above] {\north} --  (\x + 0, \y)  node [pos=0.5,left] {\west};
}
\def\macronegativetile{
     \def\south{}
     \def\east{}
     \def\north{}
     \def\west{} 
     \draw (\x + 0, \y) --  (\x+1,\y) node [pos=0.5,above] {\south} -- (\x+1,\y+1) node [pos=0.5,left] {\east} -- (\x+0,\y+1) node [pos=0.5,above] {\north} --  (\x + 0, \y)  node [pos=0.5,left] {\west};
}
\def\matchingcolor{2pt}
 \begin{tikzpicture}[scale=\myscalematching,
 font=\footnotesize
 ]
 \foreach \x in {0,1,...,2} 
 {
\def\y{0}
     \ifthenelse{\x = 0 \OR \x =2}
     {
     \macropositivetile
     }{
     \macronegativetile
     }
     \ifthenelse{\x = 0 \OR \x =2}{\draw[line width=\matchingcolor] (\x + 0, \y+0) -- (\x+1,\y+0);}{} 
     \ifthenelse{\x = 0}{\macrofilledtile \draw[line width=\matchingcolor] (\x + \y+0, \y+1) -- (\x+1,\y+1); }{} 
     \ifthenelse{\x = 0}{\draw (\x + 0.5, \y+0.5) node {\boxed{$a$}}; }{}
     \ifthenelse{\x = 2}{ \macrofilledtile\draw (\x + 0.5, \y+0.5) node {\boxed{$c$}}; }{}
     
          \ifthenelse{\x = 0 \OR \x = 2}{\draw[red,densely dashed,line width=1.5*\matchingcolor] (\x+0, \y+0) -- (\x, \y+1);}{} 
          \ifthenelse{\x = 0 \OR \x = 2}{\draw[red,densely dashed,line width=1.5*\matchingcolor] (\x+1, \y+0) -- (\x+1, \y+1);}{} 
 }
\def\y{1}
 \foreach \x in {2,...,5} 
 {
     \ifthenelse{\x = 2 \OR \x = 3 \OR \x = 5}{\macrofilledtile}{}
     \ifthenelse{\x = 2}
     {\draw (\x + 0.5, \y+0.5) node {{$d$}}; }{}
     \ifthenelse{\x = 3}
     {\draw (\x + 0.5, \y+0.5) node {{$e$}}; }{}
     \ifthenelse{\x = 3 \OR \x =5}
     {\macropositivetile}{\macronegativetile}
     \ifthenelse{\x = 2}{\draw[line width=\matchingcolor] (\x+0, \y+0) -- (\x, \y+1);}{} 
     \ifthenelse{\x = 2}{\draw[red, densely dashed, line width=1.5*\matchingcolor] (\x+0, \y+1) -- (\x+1,\y+1);}{} 
      \ifthenelse{\x = 3}{\draw[red, densely dashed, line width=1.5*\matchingcolor] (\x+1, \y) -- (\x+1,\y+1);}{} 
      \ifthenelse{\x = 3}{\draw[line width=\matchingcolor] (\x+0, \y+0) -- (\x+1, \y+0);}{} 
     \ifthenelse{\x = 3}{\draw[line width=\matchingcolor] (\x+0, \y+1) -- (\x+1,\y+1);}{} 
      \ifthenelse{\x =5}{\draw[line width=\matchingcolor] (\x+0, \y+0) -- (\x+1, \y+0);}{} 
      \ifthenelse{\x = 5}{\draw (\x + 0.5, \y+0.5) node {\boxed{$g$}}; }{}
      \ifthenelse{\x = 5}{\draw[red,densely dashed,line width=1.5*\matchingcolor] (\x+0, \y+0) -- (\x, \y+1);}{} 
          \ifthenelse{\x = 5}{\draw[red,densely dashed,line width=1.5*\matchingcolor] (\x+1, \y+0) -- (\x+1, \y+1);}{} 
 } 
\def\y{2}
 \foreach \x in {5} 
 {
     \macrofilledtile
     \macronegativetile
     \ifthenelse{\x = 5}{\draw[line width=\matchingcolor] (\x+0, \y+0) -- (\x, \y+1);}{} 
     \ifthenelse{\x = 5}{\draw[line width=\matchingcolor] (\x+1, \y) -- (\x+1,\y+1);}{} 
     \draw (\x + 0.5, \y+0.5) node {{$h$}};
 }  
\def\y{3}
\foreach \x in {5,6} 
{
     \ifthenelse{\x = 5}
     {\macropositivetile \macrofilledtile}{\macronegativetile}
     \ifthenelse{\x = 5}{\draw[line width=\matchingcolor] (\x+0, \y+1) -- (\x+1,\y+1);}{} 
     \ifthenelse{\x = 5}{\draw[red,densely dashed,line width=1.5*\matchingcolor] (\x+0, \y+0) -- (\x, \y+1);}{} 
          \ifthenelse{\x = 5}{\draw[red,densely dashed,line width=1.5*\matchingcolor] (\x+1, \y+0) -- (\x+1, \y+1);}{} 

     \ifthenelse{\x = 6}{\draw[line width=\matchingcolor] (\x+1, \y) -- (\x+1,\y+1);}{} 
     \ifthenelse{\x = 5}{\draw (\x + 0.5, \y+0.5) node {\boxed{$i$}}; }{}
 } 
\end{tikzpicture} 
\hspace{-0.8cm}
 \begin{tikzpicture}[scale=\myscalematching,font=\scriptsize]
 \foreach \x in {0,1,...,2} 
 {
\def\y{0}
     \ifthenelse{\x = 0 \OR \x =2}
     {
     \macropositivetile
     }{
     \macronegativetile
     }
     \ifthenelse{\x = 0}{\macrofilledtile 
      }{}
     \ifthenelse{\x = 2}{ \macrofilledtile 
     }{}
     
          \ifthenelse{\x = 0 \OR \x = 2}{\draw[red,line width=1.5*\matchingcolor] (\x+0, \y+0) -- (\x, \y+1);}{} 
          \ifthenelse{\x = 0 \OR \x = 2}{\draw[red,line width=1.5*\matchingcolor] (\x+1, \y+0) -- (\x+1, \y+1);}{} 
 }
\def\y{1}
 \foreach \x in {2,...,5} 
 {
     \ifthenelse{\x = 2 \OR \x = 3 \OR \x = 5}{\macrofilledtile}{}
     \ifthenelse{\x = 3 \OR \x =5}
     {\macropositivetile}{\macronegativetile}
     \ifthenelse{\x = 2}{\draw[red, line width=1.5*\matchingcolor] (\x+0, \y+1) -- (\x+1,\y+1);}{} 
      \ifthenelse{\x = 3}{\draw[red, line width=1.5*\matchingcolor] (\x+1, \y) -- (\x+1,\y+1);}{} 
      \ifthenelse{\x = 5}{\draw[red,line width=1.5*\matchingcolor] (\x+0, \y+0) -- (\x, \y+1);}{} 
          \ifthenelse{\x = 5}{\draw[red,line width=1.5*\matchingcolor] (\x+1, \y+0) -- (\x+1, \y+1);}{} 
 } 
\def\y{2}
 \foreach \x in {5} 
 {
     \macrofilledtile
     \macronegativetile
 }  
\def\y{3}
\foreach \x in {5,6} 
{
     \ifthenelse{\x = 5}
     {\macropositivetile \macrofilledtile}{\macronegativetile}
     \ifthenelse{\x = 5}{\draw[red,line width=1.5*\matchingcolor] (\x+0, \y+0) -- (\x, \y+1);}{} 
          \ifthenelse{\x = 5}{\draw[red,line width=1.5*\matchingcolor] (\x+1, \y+0) -- (\x+1, \y+1);}{} 

     \ifthenelse{\x = 6}{\draw[line width=\matchingcolor] (\x+1, \y) -- (\x+1,\y+1);}{} 
 } 
\end{tikzpicture} 
		\caption{Tiles $\square_L$ associated to blocks $L$ of circled edges (left);
			the set $fil(s)$ of shaded tiles and the set $pm(s)$ of thick solid edges (right) for the subword s=$\textcolor{blue}{\boxed{1}\boxed{1}\boxed{01}\boxed{0}}$
		}
		\label{fig:square_L:s11010}\label{fig:example:pm_acgi}
	\end{center}
\end{figure}

\begin{example}
\label{example:subword_to_pm}
Consider the word $w = 1011101100$. 
In Fig.~\ref{fig:square_L:s101101100},  
we illustrate the following steps. 
\begin{enumerate}[(step 1)]
\item Circle the edges of $G$ corresponding to the subword  $s=\textcolor{blue}{101101100}$. 

\item We get a set of two tiles: $\square_{\textcolor{blue}{1011}}$=$\boxed{d}$ and $\square_{\textcolor{blue}{01100}}$=$\boxed{j}$. 
(Compare $\left\{ \boxed{d}, \boxed{j} \right\}$ with the antichain $A$=$\{\circled{\dd}, \circled{\jj}\}$ from Fig.~\ref{fig:example:antichain_to_subword} (left).) 

\item The two tiles give us two blocks of shaded tiles: 
\[
fil\left(\boxed{d}\right)=\left\{ \boxed{d}, \boxed{e} \right\}
\text{ and }
fil\left(\boxed{j}\right)=\left\{ \boxed{h}, \boxed{i}, \boxed{j} \right\}. 
\]
Their union is 
\[
fil(s)=\left\{\boxed{d}, \boxed{e}, \boxed{h}, \boxed{i}, \boxed{j} \right\}.
\] 
(Compare this 
with the order filter 
$\left\{\circled{\dd}, \circled{\ee}, \circled{\hh}, \circled{\ii}, \circled{\jj} \right\}$
generated by the antichain $A$.)

\item  
The perfect matching $pm(s)$ is the symmetric difference of the edges bounding $fil(s)$ and the edges in the minimal matching of $G$, see the set of thick  edges in Fig.~\ref{fig:square_L:s101101100} (right).
\end{enumerate}

\bigskip

In Fig.~\ref{fig:square_L:s11010}, we illustrate the bijection for a different subword of $w$.
\begin{enumerate}[(step 1)]
\item Circle the edges of $G$ corresponding to the subword  $s=\textcolor{blue}{11010}$.

\item We get a set of four tiles:  
$\square_{\textcolor{blue}{1}}$=$\boxed{a}$, $\square_{\textcolor{blue}{1}}$=$\boxed{c}$ , $\square_{\textcolor{blue}{01}}$=$\boxed{g}$, 
and $\square_{\textcolor{blue}{0}}$=$\boxed{i}$. 
(Compare this with the antichain 
$A$=$\{\circled{\aa}$, 
$\circled{\cc}$, 
$\circled{\gg}$, 
$\circled{\ii}\}$ from Fig.~\ref{fig:example:antichain_to_subword} (right).)

\item 
The four tiles give us three blocks of shaded tiles:  
\begin{align*}
fil\Big(\boxed{a}\Big)&=\Big\{ \boxed{a} \Big\},\\ 
fil\Big(\boxed{c}\Big)&=\left\{ \boxed{c}, 
\boxed{d}, \boxed{e}\right\}, \\
fil\left(\boxed{g}\right)&=\left\{\boxed{g}, \boxed{h}, \boxed{i} \right\}
=fil\left(\boxed{i}\right).
\end{align*}
Their union is 
\[ fil(11010)
=\left\{ \boxed{a}, \boxed{c}, \boxed{d}, \boxed{e}, \boxed{g}, \boxed{h}, \boxed{i}\right\}.  \] 
(Compare this with the order filter 
$\left\{ \circled{\aa}, \circled{\cc}, \circled{\dd}, \circled{\ee}, \circled{\gg}, \circled{\hh}, \circled{\ii}\right\}$
generated by the antichain $A$.) 
\item The perfect matching $pm(s)$ is the symmetric difference of the edges bounding $fil(s)$ and the edges in the minimal matching of $G$, see the set of thick edges in Fig.~\ref{fig:square_L:s11010} (right).
\end{enumerate} 
\end{example}

\begin{remark}
Let $P$ be the piecewise-linear poset and $Q$ the quiver corresponding to a binary word $w$. 
Let $M(w)$ be the indecomposable quiver  representation with dimension vector $(1,1,\dots,1)$. 
Then each order filter $F$ of $P$ corresponds to the support of a subrepresentation $N$ of $M(w)$, see~\cite[Remark~5.5]{MSW13} and \cite{CaSi18}. 
The antichain consisting of the minimal elements of $F$ corresponds to the summands of the projective cover of $N$, see for example~\cite[Section~2.2]{Sch14}.

\def\id{id}
 
For example,
let $Q$ be the following quiver.  
\begin{center}
\begin{tikzpicture}[xscale=\myxscale,yscale=\myyscale,>=latex]
\def\posetedgecolor{blue}
\node(1) at (0,0) {{$1$}}; 
\node(2) at (1,-1) {$2$}; 
\node(3) at (2,0) {{$3$}}; 
\node(4) at (3,1) {$4$}; 
\node(5) at (4,2) {$5$}; 
\node(6) at (5,1) {$6$}; 
\node(7) at (6,2) {{$7$}}; 
\node(8) at (7,3) {$8$};
\node(9) at (8,2) {{$9$}};
\node(10) at (9,1)  {$10$};

\draw[->] (2) -- (1); 
\draw[->]  (2) -- (3); 
\draw[->] (3) -- (4); 
\draw[->] (4) -- (5); 
\draw[->] (6) -- (5); 
\draw[->] (6) -- (7); 
\draw[->] (7) -- (8); 
\draw[->] (9) -- (8); 
\draw[->] (10) -- (9); 
\end{tikzpicture}
\end{center}

 The first order filter in 
Example~\ref{example:subword_to_pm} 
corresponds to the quiver representation

\begin{center}
\begin{tikzpicture}[yscale=\myyscale, xscale=\myxscale,
>=latex, 
font = \small
]
\def\posetedgecolor{red}
\node(1) at (0,0) {$0$}; 
\node(2) at (1,-1) {$0$}; 
\node(3) at (2,0) {$0$}; 
\node(4) at (3,1) {$\Bbbk$}; 
\node(5) at (4,2) {$\Bbbk$}; 
\node(6) at (5,1) {$0$}; 
\node(7) at (6,2) {$0$}; 
\node(8) at (7,3) {$\Bbbk$}; 
\node(9) at (8,2) {$\Bbbk$}; 
\node(10) at (9,1) {$\Bbbk$}; 

\draw[->] (2) -- (1) node[\posetedgecolor,pos=0.25,above] {$0$}; 
\draw[->]  (2) -- (3) node[\posetedgecolor,pos=0.25,above] {$0$}; 
\draw[->] (3) -- (4) node[\posetedgecolor,pos=0.25,above] {$0$};  
\draw[->] (4) -- (5) node[\posetedgecolor,pos=0.25,above] {$\id$}; 
\draw[->] (6) -- (5) node[\posetedgecolor,pos=0.25,above] {$0$}; 
\draw[->] (6) -- (7) node[\posetedgecolor,pos=0.25,above] {$0$};  
\draw[->] (7) -- (8) node[\posetedgecolor,pos=0.25,above] {$0$};  
\draw[->] (9) -- (8) node[\posetedgecolor,pos=0.25,above] {$\id$};  
\draw[->] (10) -- (9) node[\posetedgecolor,pos=0.25,above] {$\id$}; 
\end{tikzpicture}
\end{center}
of $Q$. 
Its projective cover is the direct sum $P(4)\oplus P(10)$ of $P(4)$, the projective representation at vertex $4$, and $P(10)$, the projective representation at vertex $10$. 

The second order filter in Example~\ref{example:subword_to_pm} 
corresponds to the quiver representation 
\begin{center}
\begin{tikzpicture}[yscale=\myyscale, xscale=\myxscale, 
font = \small 
]
\def\posetedgecolor{red}
\node(1) at (0,0) {$\Bbbk$}; 
\node(2) at (1,-1) {$0$}; 
\node(3) at (2,0) {$\Bbbk$}; 
\node(4) at (3,1) {$\Bbbk$}; 
\node(5) at (4,2) {$\Bbbk$}; 
\node(6) at (5,1) {$0$}; 
\node(7) at (6,2) {$\Bbbk$}; 
\node(8) at (7,3) {$\Bbbk$}; 
\node(9) at (8,2) {$\Bbbk$}; 
\node(10) at (9,1) {$0$}; 

\draw[->] (2) -- (1) node[\posetedgecolor,pos=0.25,above] {$0$}; 
\draw[->] (2) -- (3) node[\posetedgecolor,pos=0.25,above] {$0$}; 
\draw[->] (3) -- (4) node[\posetedgecolor,pos=0.25,above] {$\id$};  
\draw[->] (4) -- (5) node[\posetedgecolor,pos=0.25,above] {$\id$}; 
\draw[->] (6) -- (5) node[\posetedgecolor,pos=0.25,above] {$0$}; 
\draw[->] (6) -- (7) node[\posetedgecolor,pos=0.25,above] {$0$};  
\draw[->] (7) -- (8) node[\posetedgecolor,pos=0.25,above] {$\id$};  
\draw[->] (9) -- (8) node[\posetedgecolor,pos=0.25,above] {$\id$};  
\draw[->] (10) -- (9) node[\posetedgecolor,pos=0.25,above] {$0$}; 
\end{tikzpicture}
\end{center} 
of $Q$. 
Its projective cover is the direct sum $P(1)\oplus P(3) \oplus P(7) \oplus P(9)$ 
of the projective representations at vertices $1$, $3$, $7$, and $9$, respectively.

Both representations are subrepresentations of the indecomposable quiver representation $M(w)$ 
\begin{center}
\begin{tikzpicture}[yscale=\myyscale, xscale=\myxscale, 
>=latex, 
font = \small
]
\def\posetedgecolor{red}
\node(1) at (0,0) {$\Bbbk$}; 
\node(2) at (1,-1) {$\Bbbk$}; 
\node(3) at (2,0) {$\Bbbk$}; 
\node(4) at (3,1) {$\Bbbk$}; 
\node(5) at (4,2) {$\Bbbk$}; 
\node(6) at (5,1) {$\Bbbk$}; 
\node(7) at (6,2) {$\Bbbk$}; 
\node(8) at (7,3) {$\Bbbk$}; 
\node(9) at (8,2) {$\Bbbk$}; 
\node(10) at (9,1) {$\Bbbk$}; 

\draw[->] (2) -- (1) node[\posetedgecolor,pos=0.25,above] {$\id$}; 
\draw[->]  (2) -- (3) node[\posetedgecolor,pos=0.25,above] {$\id$}; 
\draw[->] (3) -- (4) node[\posetedgecolor,pos=0.25,above] {$\id$};  
\draw[->] (4) -- (5) node[\posetedgecolor,pos=0.25,above] {$\id$}; 
\draw[->] (6) -- (5) node[\posetedgecolor,pos=0.25,above] {$\id$}; 
\draw[->] (6) -- (7) node[\posetedgecolor,pos=0.25,above] {$\id$};  
\draw[->] (7) -- (8) node[\posetedgecolor,pos=0.25,above] {$\id$};  
\draw[->] (9) -- (8) node[\posetedgecolor,pos=0.25,above] {$\id$};  
\draw[->] (10) -- (9) node[\posetedgecolor,pos=0.25,above] {$\id$}; 
\end{tikzpicture}
\end{center}
of $Q$.
\end{remark}

\subsection*{Acknowledgements}
This project was inspired by conversations with M.~Stipulanti and L.~Tarsissi during the conference ``Sage Days 82 : Women in Sage", held January 2017 in Paris, funded by the OpenDreamKit project. We thank the organizers (J.~Balakrishnan, V.~Pons, and 
J.~Striker) for creating such a productive environment. 
We are also grateful for helpful conversations with E.~Barnard, R.~Schiffler, K.~Serhiyenko, and E.~Y{\i}ld{\i}r{\i}m. 
Finally, we thank the anonymous reviewers whose suggestions helped improve and clarify this paper.
Most of this work was completed while the second author was an Assistant Research Professor at University of Connecticut.

\bibliography{bib}

\begin{thebibliography}{{Mor}15}

\bibitem[BCI74]{BCI74}
D.~Broline, D.~W. Crowe, and I.~M. Isaacs.
\newblock The geometry of frieze patterns.
\newblock {\em Geometriae Dedicata}, 3:171--176, 1974.

\bibitem[CC06]{CC06}
Philippe Caldero and Fr\'{e}d\'{e}ric Chapoton.
\newblock Cluster algebras as {H}all algebras of quiver representations.
\newblock {\em Comment. Math. Helv.}, 81(3):595--616, 2006.

\bibitem[Cla20]{Cla20}
Andrew Claussen.
\newblock Expansion posets for polygon cluster algebras.
\newblock {\em Preprint, \arxiv{2005.02083}}, 2020.

\bibitem[{\c{C}}S13]{CS13}
\.{I}lke {\c{C}}anak\c{c}{\i} and Ralf Schiffler.
\newblock Snake graph calculus and cluster algebras from surfaces.
\newblock {\em J. Algebra}, 382:240--281, 2013.

\bibitem[{\c{C}}S18]{CS18}
\.{I}lke {\c{C}}anak\c{c}{\i} and Ralf Schiffler.
\newblock Cluster algebras and continued fractions.
\newblock {\em Compos. Math.}, 154(3):565--593, 2018.

\bibitem[{\c{C}}S21]{CaSi18}
\.{I}lke {\c{C}}anak\c{c}{\i} and Sibylle Schroll.
\newblock Lattice bijections for string modules, snake graphs and the weak
  {B}ruhat order.
\newblock {\em Adv. in Appl. Math.}, 126:102094, 2021.

\bibitem[Fel04]{Fel04}
Stefan Felsner.
\newblock Lattice structures from planar graphs.
\newblock {\em Electron. J. Combin.}, 11(1):Research Paper 15, 24, 2004.

\bibitem[FZ02]{FZ02}
Sergey Fomin and Andrei Zelevinsky.
\newblock Cluster algebras. {I}. {F}oundations.
\newblock {\em J. Amer. Math. Soc.}, 15(2):497--529, 2002.

\bibitem[GMV19]{GMV16}
Emily Gunawan, Gregg Musiker, and Hannah Vogel.
\newblock Cluster algebraic interpretation of infinite friezes.
\newblock {\em European J. Combin.}, 81:22--57, 2019.

\bibitem[KMR18]{KMR18}
Kolja Knauer, Leonardo {Mart\'{\i}nez-Sandoval}, and Jorge~Luis {Ram\'{\i}rez
  Alfons\'{\i}n}.
\newblock On lattice path matroid polytopes: integer points and {E}hrhart
  polynomial.
\newblock {\em Discrete Comput. Geom.}, 60(3):698--719, 2018.

\bibitem[LLN17]{LLN17}
Kyungyong Lee, Li~Li, and Ba~Nguyen.
\newblock New combinatorial formulas for cluster monomials of type {$A$}
  quivers.
\newblock {\em Electron. J. Combin.}, 24(2):Paper 2.42, 41, 2017.

\bibitem[LRS17]{LRS17}
Julien Leroy, Michel Rigo, and Manon Stipulanti.
\newblock Counting the number of non-zero coefficients in rows of generalized
  {P}ascal triangles.
\newblock {\em Discrete Math.}, 340(5):862--881, 2017.

\bibitem[LS19]{LS17}
Kyungyong Lee and Ralf Schiffler.
\newblock Cluster algebras and {J}ones polynomials.
\newblock {\em Selecta Math. (N.S.)}, 25(4):Paper No. 58, 2019.

\bibitem[{Mor}15]{Mor15}
Sophie {Morier-Genoud}.
\newblock Coxeter's frieze patterns at the crossroads of algebra, geometry and
  combinatorics.
\newblock {\em Bull. Lond. Math. Soc.}, 47(6):895--938, 2015.

\bibitem[MSS20]{MSS20}
Thomas McConville, Bruce~E. Sagan, and Clifford Smyth.
\newblock On a rank-unimodality conjecture of {M}orier-{G}enoud and {O}vsienko.
\newblock {\em Preprint, \arxiv{2008.13232}}, 2020.

\bibitem[MSW11]{MSW11}
Gregg Musiker, Ralf Schiffler, and Lauren Williams.
\newblock Positivity for cluster algebras from surfaces.
\newblock {\em Adv. Math.}, 227(6):2241--2308, 2011.

\bibitem[MSW13]{MSW13}
Gregg Musiker, Ralf Schiffler, and Lauren Williams.
\newblock Bases for cluster algebras from surfaces.
\newblock {\em Compos. Math.}, 149(2):217--263, 2013.

\bibitem[NT20]{NT20}
Wataru Nagai and Yuji Terashima.
\newblock Cluster variables, ancestral triangles and {A}lexander polynomials.
\newblock {\em Adv. Math.}, 363:106965, 37, 2020.

\bibitem[Pro02]{Pro02}
James Propp.
\newblock Lattice structure for orientations of graphs.
\newblock {\em Preprint, \arxiv{math/0209005}}, 2002.

\bibitem[Pro20]{Pro05}
James Propp.
\newblock The combinatorics of frieze patterns and {M}arkoff numbers.
\newblock {\em Integers}, 20(A12), 2020.

\bibitem[Sch08]{Sch08}
Ralf Schiffler.
\newblock A cluster expansion formula ({$A_n$} case).
\newblock {\em Electron. J. Combin.}, 15(1):Research paper 64, 9, 2008.

\bibitem[Sch14]{Sch14}
Ralf Schiffler.
\newblock {\em Quiver representations}.
\newblock CMS Books in Mathematics/Ouvrages de Math\'{e}matiques de la SMC.
  Springer, Cham, 2014.

\bibitem[ST09]{ST09}
Ralf Schiffler and Hugh Thomas.
\newblock On cluster algebras arising from unpunctured surfaces.
\newblock {\em Int. Math. Res. Not.}, 2009(17):3160--3189, 2009.

\bibitem[Yac19]{Yac19}
Matthew Yacavone.
\newblock Cluster algebras and the {HOMFLY} polynomial.
\newblock {\em Preprint, \arxiv{1910.10267}}, 2019.

\bibitem[Yur19]{Yur18}
Toshiya Yurikusa.
\newblock Combinatorial cluster expansion formulas from triangulated surfaces.
\newblock {\em Electron. J. Combin.}, 26(2):Paper 2.33, 39, 2019.

\end{thebibliography}
\bibliographystyle{alpha}

\end{document}